\newtheorem{theorem}{Theorem}
\newtheorem{lemma}{Lemma}[section]
\newtheorem{prop}{Proposition}
\newtheorem{corollary}{Corollary}
\def\beq{ \begin{equation} }
\def\eeq{ \end{equation} }
\def\mn{\medskip\noindent}
\def\ep{\epsilon}
\def\square{\vcenter{\vbox{\hrule height .4pt
  \hbox{\vrule width .4pt height 5pt \kern 5pt
        \vrule width .4pt} \hrule height .4pt}}}
\def\ZZ{\mathbb{Z}}
\begin{document}

\title{Convergence of Stochastic Interacting Particle Systems in Probability under a Sobolev Norm}
\author{Jian-Guo Liu \\
Dept. of Physics and Dept. of Mathematics, Duke U., USA\\
\ \\
Yuan Zhang \\
Dept. of Mathematics, UCLA, USA}
\date{}
\maketitle

\abstract{In this paper, we consider particle systems with interaction and Brownian motion. We prove that when the initial data is from the sampling of Chorin's method, i.e., the initial vertices are on lattice points $hi\in \mathbb{R}^d$ with mass $\rho_0(hi) h^d$, where $\rho_0$ is some initial density function, then the regularized empirical measure of the interacting particle system converges in probability to the corresponding mean-field partial differential equation with initial density $\rho_0$, under the Sobolev norm of $L^\infty(L^2)\cap L^2(H^1)$. Our result is true for all those systems when the interacting function is bounded, Lipschitz continuous and satisfies certain regular condition. And if we further regularize the interacting particle system, it also holds for some of the most important systems of which the interacting functions are not. For systems with repulsive Coulomb interaction, this convergence holds globally on any interval $[0,t]$. And for systems with attractive Newton force as interacting function, we have convergence within the largest existence time of the regular solution of the corresponding Keller-Segel equation.}

\section{Introduction}

\mn In this paper we consider the $N-$particle system of many indistinguishable individuals interacting with each other following the same physical laws. To be specific, we consider $\{X_i(t)\}_{i=1}^N\in \mathbb{R}^d$ as the trajectories of the $N$ particles at time $t$. Suppose all particles have the same ``weight", with certain initial data $\{X_i(0)\}_{i=1}^N$, those trajectories following the stochastic differential equations as follows:
\beq
\label{SDE0}
X_i(t)=X_i(0)+\frac{1}{N}\sum_{j=1}^N F_0\big(X_i(s)-X_j(s) \big)ds+\sigma B_i(t)
\eeq
where $\{B_i(t)\}_{i=1}^N$ are independent standard $d-$dimensional Brownian motions. We show that, as $N\to\infty$ and under proper assumption of the initial data, the regularized empirical measure of the interacting particle system converges in probability to the solution of the corresponding partial differential equation (PDE) as follows, which is also called the mean-field equation: 
\beq
\label{PDE}
\left\{
\begin{aligned}
&\frac{\partial\rho}{\partial t}(x,t)=\frac{1}{2}\Delta\rho-\nabla\cdot(\rho F(x,t))\\
&F(x,t)=\int_{\mathbb{R}^d} F_0(x-y)\rho(y,t)dy.
\end{aligned}
\right.
\eeq
The interest on such convergence was raised from the study of propagation of chaos, which was originated by Kac \cite{Kac}. It is of interest since that to prove the propagation of chaos, one need to prove that the empirical measure of the particle system converges in law to the solution of the mean-field PDE with a proper initial condition. See the review by Sznitman \cite{Sznitman} for reference.

\mn Following this method, the propagation of chaos has been proved for different types of systems since the 1970s. McKean \cite{McKean} proved the propagation of chaos when the interacting function $F_0$ is smooth. He also conjectured that when $F_0(x)=\delta(x)$, the one dimensional mean-field equation is the Burgers equation. This conjecture was proved \cite{Calderoni,Gutkin,Sznitman2}. More cases when $F_0$ is no longer smooth has been studied. For $d=2$ and the interacting force given by $F_0(x)=-\nabla^{\perp}\Phi(x)$ where $\nabla^{\perp}=(\frac{\partial}{\partial x_2}, \frac{\partial}{\partial x_1})$ and $\Phi(x)=-\frac{1}{2}\ln|x|$, then the mean-field equation becomes the incompressible Navier-Stokes equation. When
$\sigma=0$, it is the incompressible Euler equation. The mean-field limits of this type of system have been studied in \cite{Marchioro} and \cite{Osada}, with or without cut-off parameters. And when $d=3$, the three dimensional Navier-Stokes equation and path-wise convergence rate with the stochastic vortex method have also been studied in a more recent work of \cite{Fontbona}. And more recently, for the system with Newton/ Coulomb interaction, i.e., when $F_0(x)=\pm\nabla \Phi(x)$, $\forall x\in \mathbb{R}^d-\{0\}$, where
$$
\Phi(x)=\left\{
\begin{aligned}
-\frac{1}{2\pi}\ln|x|&, \ \ d=2\\
\frac{C_d}{|x|^{d-2}}&, \ \  d\ge 3. 
\end{aligned}
\right.
$$
where $C_d=\frac{1}{d(d-2)\alpha_d}$ and $\alpha_d=\frac{\pi^{d/2}}{\Gamma(d/2+1)}$, the mean-field limit and propagation of chaos was proved by Liu and Yang, \cite{Liu1, Liu2, Liu3}. We refer readers to \cite{Bolley1,Bolley2,Fournier,Rezakhanlou1,Shiga} for more instances of the study of propagation of chaos. And we also refer to \cite{Craig} for recent progress on a blob method for the aggregation equation.  

\mn However, the mean-field limit results that the interacting particle system converges to the solution of the corresponding PDE, in the study of propagation of chaos, are usually obtained in a relatively weak sense, where the distance between two density functions are defined as Wasserstein distance. In this paper, we are, to our knowledge, for the first time to prove the convergence of the regularized empirical measure of such interacting particle system to the corresponding mean-field PDE under a stronger, Sobolev distance. Our result is generally true for all $F_0$ that is bounded, Lipschitz continuous and satisfies a regularity condition that will be specified in \eqref{Function Regular}. And when $F_0$ is the Newton/ Coulomb interaction, it is also true when the interacting particle system is further regularized. For the Coulomb interaction when there is a repulsive interaction, our result remains true on any interval $[0,t]$, while when $F_0$ is the gradient of Newton potential, since the system now has a attractive interacting force, we have convergence within the largest existence time of the regular solution of the corresponding Keller-Segel equation.

\mn To specify the interacting particle system we study in this paper precisely, we first need to determine the initial data. Majorly speaking, there are two ways to set up the initial data. On one hand, some previous researches like \cite{Goodman,Liu1,Marchioro,Osada} took the initial positions as independent identically distributed random variables with common density $\rho_0$. This approach is also known as the Monte Carlo sampling. However, this method is often inefficient in the computation. On the other hand, in \cite{Chorin}, where Chorin first introduced the vortex method in 1973, initial positions of the vertices are assumed to be on the lattice points $hi\in \mathbb{R}^2$ with a weight function $\rho_0(hi)h^2$ determined by the initial density. This way of sampling has been used in \cite{Long} and more recently, in \cite{Liu}, and it will be the initial condition we use in this paper. 

\mn To be specific, let $\rho_0(x), x\in \mathbb{R}^d$ be the initial density that satisfies the followings: 
\begin{itemize}
\item $\rho_0(x)$ is compact supported with a compact set $D\subset \mathbb{R}^d$. And $\rho_0(x)\ge 0$ for all $x\in D$.
\item $\int_D \rho_0(x) dx=1$.
\item $\rho_0$ is a Lipschitz continuous function with Lipschitz constant $L_{\rho_0}$. 
\item $\rho_0(x)\in H^k(\mathbb{R}^d)$ for some $k\ge \frac{3}{2}d+2$.
\end{itemize}
For each $h>0$. Let set $\Theta_h\subset \ZZ^d$ be defined as follows: 
\beq
\label{Index}
\Theta_h=\{\theta: \theta\in \ZZ^d, h\theta\in D\}. 
\eeq
For each $\theta\in \Theta_h$, let 
$$
C(\theta,h)=h\theta+\left[-\frac{h}{2},\frac{h}{2}\right]^d. 
$$
And it is east to see that $\{C(\theta,h)\}_{\theta\in \Theta_h}$ is a family of non-overlapping boxes and 
$$
D\subset \bigcup_{\theta\in \Theta_h} C(\theta,h).
$$
Let $N_h={\rm card}(\Theta_h)$, i.e., the number of elements in $\Theta_h$. Then by definition it is easy to check that 
\beq
\label{Distance1}
L_D=\int_D 1 dx\le  h^d N_h\le \int_{D_1} 1 dx=U_D
\eeq
for all $h<1$, where $D_1=\{x\in \mathbb{R}^d: \inf_{y\in D}|x-y|_\infty\le 1\}$. And as $h\to\infty$, we have 
$$
\lim_{h\to 0} h^d N_h=\int_D 1 dx. 
$$

\mn [Remark]: In this paper, we will use $\|\cdot\|$ for the $L^2$ norm of a function or vector valued function. I.e., for any $f(x)$, $x\in \mathbb{R}^d$ 
$$
\|f(x)\|^2= \int_{\mathbb{R}^d} |f(x)|^2 dx 
$$
and similar for the $L^p$ norm $\|\cdot\|_p$. And we will use $|\cdot|$ for the $L^2$ norm of a vector. I.e., for any vector $x=(x_1,x_2,\cdots, x_n)\in \mathbb{R}^d$, 
$$
|x|^2=x_1^2+x_2^2+\cdots+x_d^2
$$
and similar for the $L^p$ norm $|\cdot|_p$.

\mn For each $h>0$, since $\Theta_h$ is finite, we can have all its elements ordered under a natural ordering:
$$
\Theta_h=\{\theta_{1,h},\theta_{2,h},\cdots, \theta_{N_h,h}\} 
$$
and have the initial point of the $i$th particle to be $\theta_{i,h}$, $i=1,2,\cdots, N_h$. 

\mn With the initial data specified, we now formally introduce the stochastic interacting particle system in our paper: 

\mn When $F_0$ is bounded and Lipschitz continuous, let $\{X_{h,i}(t)\}_{i=1}^{N_h}$ be the {\bf interacting particle system} determined by the following system of SDE:
\beq
\label{particle system}
X_{h,i}(t)=\theta_{i,h}h+\int_0^t \left(\sum_{j=1}^{N_h} F_0\big(X_{h,i}(s)-X_{h,j}(s)\big)\rho_0(\theta_{j,h}) h^d \right)ds+B_i(t), \ i=1,2,\cdots, N_h. 
\eeq
Noting that $F_0$ is bounded and Lipschitz continuous, the SDE in \eqref{particle system} always has a unique strong solution. 

\mn When $F_0$ is not bounded and Lipschitz continuous, in order to have a SDE with a unique strong solution, we need to define $\{X_{h,i,\delta_h}(t)\}_{i=1}^{N_h}$ to be the  {\bf regularized interacting particle system} as follows:
\beq
\label{regularized particle system}
X_{h,i,\delta_h}(t)=\theta_{i,h}h+\int_0^t \left(\sum_{j=1}^{N_h} F_{0,\delta_h}\big(X_{h,i,\delta_h}(s)-X_{h,j,\delta_h}(s) \big)\rho_0(\theta_{j,h}) h^d \right)ds+B_i(t).
\eeq
where 
$$
F_{0,\delta_h}=F_0*\psi_{\delta_h}, \ \ \psi_{\delta_h}(x)=\delta_h^{-d} \psi(\delta_h x)
$$
and
$$
\psi(x)=\left\{
\begin{aligned}
&C(1+\cos \pi|x|)^{d+2}, \ \ &|x|\le 1\\
&0, \ \ &|x|>1
\end{aligned}
\right.
$$
with $C$ such that $\int_{\mathbb{R}^d} \psi(x) dx=1$. Here $\delta_h$ is some number goes to 0 as $h\to0$. It will be specified later in \eqref{Delta h}. 
 
\mn With the (regularized) interacting particle system determined, we define the regularized empirical measure as follows: consider a function $\varphi(x)\in C_0^\infty(\mathbb{R}^d)$ of which the support is $\{|x|_\infty\le 1/2\}$ such that
\begin{itemize}
\item $\varphi(x)\ge 0$.
\item $\int_{|x|_\infty\le 1} \varphi(x)=1$. 
\end{itemize}
 And for $\epsilon_h=h^{q_0}$ where $q_0$ is to be specified later in Theorem 1, let 
$$
\varphi_{\ep_h}(x)=\frac{1}{\ep_h^d} \varphi\left( \frac{x}{\ep_h}\right). 
$$ 
Then for the interacting particle system when $F_0$ is bounded and Lipschitz continuous, the regularized empirical measure of $\{X_{h,i}(t)\}_{i=1}^{N_h}$ is defined as
\beq
\label{empirical1}
\rho_{h}(x,t)=\sum_{i=1}^{N_h} h^d \rho_0(\theta_{i,h}) \varphi_{\ep_h}\left(x-X_{h,i}(t)\right). 
\eeq
And the regularized empirical measure of $\{X_{h,i,\delta_h}(t)\}_{i=1}^{N_h}$ is similarly defined as 
\beq
\label{empirical1 regular}
\rho_{h,\delta_h}(x,t)=\sum_{i=1}^{N_h} h^d \rho_0(\theta_{i,h}) \varphi_{\ep_h}\left(x-X_{h,i,\delta_h}(t)\right). 
\eeq
The use of the such regularized empirical measure as above is important in computation and the regularized kernel $\varphi$ is known as a blob function in the vortex method. Pioneered by Chorin in 1973 \cite{Chorin}, the random vertex blob method is one of the most successful computational methods for fluid dynamics and other related fields. The success of the method is exemplified by the accurate computation of flow past a cylinder at the Reynolds numbers up to $9500$ in the 1990s \cite{KL}. The convergence analysis for the random vortex method for the Navier-Stokes equation is given by \cite{Goodman, Long, Marchioro} in the 1980s. We refer  to the book \cite{CK} for theoretical and practical use of vortex methods, refer to Goodman \cite{Goodman} and Long \cite{Long} for the convergence analysis of the random vortex method to the Navier-Stokes equation. We also hoped that the estimation in this paper can be adapted to do numerical analysis.

\mn With the regularized empirical measure defined, we need to add one more regularity condition on $F_0$ which assumes the existence of a constant $U_{F}<\infty$ such that 
\beq
\label{Function Regular}
\|F_0\|_{L^1\cap H^{2d+2}(\mathbb{R}^d)}\le U_F. 
\eeq
Then we can have our main result of this paper, which sates that the regularized empirical measure of the interacting particle system converges to the solution of the PDE. It is presented in the following theorem: 

\begin{theorem}
Suppose $F_0(x)$ is a bounded and Lipschitz continuous in $\mathbb{R}^d$, where the Lipschitz constant of $F_0(x)$ is given by $L_F$. And suppose that $F_0$ satisfies condition \eqref{Function Regular}. Let $\{X_{h,i}(s)\}_{i=1}^{N_h}$ be the interacting particle system defined in \eqref{particle system} and $\rho_{h}$ be the constructed regularized empirical measure (\ref{empirical1}) with regularized parameter $\ep_h=h^{1/6d}$. Let $\rho$ be the solution of the corresponding mean-field equation (\ref{PDE}) with initial density $\rho_0$. 
Then, there is a positive function $c(t)$ (will be specified in (\ref{FunC})) dependent only on $t$, $\varphi$, $L_F$, $U_F$ and $\rho_0$, and a $h_0>0$, such that
\beq
\label{MainRe}
\begin{aligned}
P\Big(&\sup_{s\in [0,t]}\big (\| (\rho-\rho_{h})(\cdot,s)\|^2+\int_0^ s\|\nabla(\rho-\rho_{h})(\cdot,q)\|^2\, dq \big ) <c(t) h^{1/12d}  \Big)\ge 1-c(t)\, h^{1/12d}
\end{aligned}
\eeq
for all $0<h\le h_0$. 
\end{theorem}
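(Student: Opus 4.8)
The plan is to set up an evolution equation (an energy inequality) for the error $e_h = \rho - \rho_h$ and control it via a Gronwall-type argument, using the blob-function structure of $\rho_h$ together with the quantitative sampling estimate \eqref{Distance1}. The core idea is that the regularized empirical measure $\rho_h$ defined in \eqref{empirical1} is itself an approximate solution of a regularized PDE: differentiating $\rho_h(x,t) = \sum_i h^d \rho_0(\theta_{i,h}) \varphi_{\ep_h}(x - X_{h,i}(t))$ in $t$ and applying Itô's formula to each $\varphi_{\ep_h}(x-X_{h,i}(t))$, the Brownian terms produce the Laplacian $\tfrac12 \Delta \rho_h$ (plus a stochastic martingale remainder), while the drift terms produce a convolution approximation of the nonlinear transport term $-\nabla\cdot(\rho_h F_h)$. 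So $\rho_h$ satisfies the mean-field PDE up to (i) a stochastic martingale forcing $dM_h$, (ii) a mollification/consistency error from replacing $F(x,t)=\int F_0(x-y)\rho(y)dy$ by the particle sum, and (iii) a sampling error from \eqref{Distance1} measuring how well the lattice quadrature $\sum_i h^d \rho_0(\theta_{i,h})(\cdots)$ approximates $\int \rho_0(y)(\cdots)\,dy$. Let me think about the structure more carefully.

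First I would write $e_h$ as the solution of a linear parabolic equation with the same principal part $\partial_t e_h = \tfrac12\Delta e_h - \nabla\cdot(\cdots) + (\text{errors})$, where the nonlinear term linearizes around $\rho$ because $F_0$ is bounded and Lipschitz. Then I would test this equation against $e_h$ in $L^2$ and integrate by parts: the Laplacian contributes $-\|\nabla e_h\|^2$, giving the dissipation that appears on the left of \eqref{MainRe}, and the transport term is controlled using boundedness of $F_0$, the Lipschitz estimate, and Young's inequality to absorb a fraction of $\|\nabla e_h\|^2$. This yields a differential inequality of the schematic form $\frac{d}{ds}\|e_h\|^2 + \|\nabla e_h\|^2 \le C(s)\|e_h\|^2 + R_h(s) + dM_h(s)$, where $R_h$ collects the deterministic consistency and sampling errors and $dM_h$ is the stochastic increment. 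The consistency error requires the regularity \eqref{Function Regular} on $F_0$ (controlling high Sobolev norms so the convolution $F_0 * (\text{blob})$ approximates $F$), and the sampling error scales like a power of $h$ through \eqref{Distance1} together with the Lipschitz and $H^k$ regularity of $\rho_0$; the choice $\ep_h = h^{1/6d}$ is calibrated precisely so these errors are of size $h^{1/12d}$.

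**The hard part will be** controlling the stochastic martingale term $M_h$ and converting the pathwise-in-expectation bound into the high-probability statement \eqref{MainRe}. I would estimate the quadratic variation of $M_h$ — which comes from the $N_h$ independent Brownian motions and scales with $h^d N_h \le U_D$ by \eqref{Distance1} times the blob normalization $\ep_h^{-(\cdots)}$ — then apply a Gronwall argument to the supremum-in-$s$ quantity. Because the inequality holds in the $\sup_{s\in[0,t]}$ form, I expect to need the Burkholder–Davis–Gundy inequality (or Doob's maximal inequality) to bound $\sup_s M_h(s)$ in terms of the quadratic variation, and then Markov's inequality to pass from an $L^2$ (or $L^1$) moment bound of order $h^{1/6d}$ to the probability bound of order $h^{1/12d}$; this square-root loss in the Markov step is exactly why the exponent in the probability $1 - c(t)h^{1/12d}$ is half of what a naive expectation bound would suggest. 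The genuinely delicate point is ensuring that all the $h$-dependent constants — the blob-width negative powers $\ep_h^{-d}$, the number of particles $N_h \sim h^{-d}$, the mollifier width $\delta_h$, and the consistency errors — combine so that the net exponent stays positive and the Gronwall constant $c(t)$ depends only on the stated quantities $t,\varphi,L_F,U_F,\rho_0$ and not on $h$; balancing these competing powers is where the specific choice $\ep_h=h^{1/6d}$ and the regularity threshold $k \ge \tfrac32 d + 2$ are forced.
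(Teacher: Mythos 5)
Your proposal takes a genuinely different route from the paper, and it is worth being clear about the contrast before pointing out where it breaks. You run a Goodman-style consistency--stability energy estimate directly on $e_h=\rho-\rho_h$ for the \emph{interacting} system. The paper never does an energy estimate on the interacting system at all: it introduces the intermediate \emph{self-consistent} process \eqref{self-consistent}, whose particles are independent and whose drift is exactly the mean-field force $F(\hat X_{h,i}(q),q)$. It then (I) controls $\rho_h-\hat\rho_h$ in $L^\infty(L^2)\cap L^2(H^1)$ by importing the trajectory coupling estimate \eqref{distance 1} of Huang--Liu (Theorem 6.1 of \cite{Liu}), which gives $\max_{s\le t}|\hat X_{h,i}(s)-X_{h,i}(s)|_{l_h^p}<\Lambda h|\ln h|$ with high probability, and (II) proves the energy estimate (Theorem 2) only between $\hat\rho_h$ and $\rho$, where by construction there is \emph{no} interaction consistency error --- only a truncation error of size $L_F\ep_h$ on each blob support, martingale errors, and the initial error, handled via the It\^o decomposition of Proposition \ref{Decompose}, a Girsanov-based separation estimate (Proposition \ref{Lemma 4.1}, which uses the independence of the $\hat X_{h,i}$), and Gronwall.

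The genuine gap in your plan sits exactly at the point this two-step architecture is designed to avoid: the interaction consistency error. In your direct approach the drift of particle $i$ is $b_i=\sum_j h^d\rho_0(\theta_{j,h})F_0\big(X_{h,i}-X_{h,j}\big)$, and your energy inequality contains the term $\int \sum_i h^d\rho_0(\theta_{i,h})\varphi_{\ep_h}(x-X_{h,i})\,(b_i-F(x,q))\cdot\nabla e_h\,dx$. You cannot ``linearize around $\rho$'' for free: the only way to compare $b_i$ with $F(X_{h,i},q)$ using the norms you control is Cauchy--Schwarz against $\|F_0\|_{L^2}$ (available via \eqref{Function Regular}) plus the mollification error, giving $|b_i-F(X_{h,i},q)|\le U_F\|e_h\|+CL_F\ep_h$; the resulting contribution is bounded by $(U_F\|e_h\|+CL_F\ep_h)\,\|\rho_h\|\,\|\nabla e_h\|$ with $\|\rho_h\|\le\|e_h\|+\|\rho\|$, which is \emph{cubic} in the error. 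Your schematic inequality $\frac{d}{ds}\|e_h\|^2+\|\nabla e_h\|^2\le C(s)\|e_h\|^2+R_h(s)+dM_h(s)$ is therefore wrong as stated, and closing the argument requires an a priori bound $\|e_h\|\le 1$ enforced by a bootstrap/continuity argument that your proposal never sets up (the paper needs no such step, because for the self-consistent system the drift error is the deterministic $O(L_F\ep_h)$ truncation term). Two smaller but real misreadings compound this: \eqref{Distance1} is not a ``quantitative sampling estimate'' --- it is merely the volume bound $L_D\le h^dN_h\le U_D$, and the actual trajectory-level sampling input the paper relies on is the external theorem \eqref{distance 1} of \cite{Liu}; and $\delta_h$ plays no role in Theorem 1, since there $F_0$ is bounded and Lipschitz and the system \eqref{particle system} is not mollified. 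Your martingale plan (Doob/BDG plus a Chebyshev step that costs a square root and produces the matching $h^{1/12d}$ exponents) does agree with what the paper does in Sections 6 and 8; but without either the bootstrap repair or the paper's self-consistent intermediary, the central estimate does not close.
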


\subsection{Outline of the Proof}
Most of the rest of the paper will be devoted to the proof of Theorem 1. The {\bf key idea} of the proof is to introduce an intermediate system of {\bf self-consistent process} $\hat X_{h,i}(t)$ defined by 
\beq
\label{self-consistent}
\hat X_{h,i}(t)=\theta_{i,h}h+\int_0^t F\big(\hat X_{h,i}(s),s\big) ds+B_i(t)
\eeq
where $\{B_i(t)\}_{i=1}^{N_h}$ are the same family of standard Brownian motions as in \eqref{particle system}, and $F(x,t)$ is defined in \eqref{PDE}. The first thing we note is that $F(x,t)$ is a bounded and Lipschitz function against $x$ with Lipschitz constant less than or equal to $L_F$. First for any $x$ and $t$,  
$$
|F(x,t)|\le \int_{\mathbb{R}^d} |F_0(x-y)|\rho(y,t) dy.
$$
Noting that $\rho$ is a probability density function on $\mathbb{R}^d$, we have
$$
|F(x,t)|\le \sup_{x\in \mathbb{R}^d} |F_0(x)|<\infty.
$$
And similarly, for any $t\ge 0$ and $x_1,x_2\in \mathbb{R}^d$, we have 
\begin{align*}
|F(x_1,t)-F(x_2,t)|&\le \int_{\mathbb{R}^d} |F_0(x_1-y)-F_0(x_2-y)|\rho(y,t) dy\\
&\le \sup_{y\in \mathbb{R}^d} |F_0(x_1-y)-F_0(x_2-y)|\\
&\le |x_2-x_1| L_F.
\end{align*}
Thus, $\{\hat X_{h,i}(t)\}_{i=1}^{N_h}$ is a family of independent strong solutions the same SDE with the same initial values as the interacting particle system. Then consider the similar regularized empirical measure:
\beq
\label{empirical2}
\hat \rho_{h}(x,t)=\sum_{i=1}^{N_h} h^d \rho_0(\theta_{i,h}) \varphi_{\ep_h}\left(x-\hat X_{h,i}(t)\right). 
\eeq
If we can estimate both the distances between $\rho_{h}(x,t)$ and $\hat \rho_{h}(x,t)$, and the distance between $\hat \rho_{h}(x,t)$ and $\rho(x,t)$, we will be able to prove Theorem 1. 

\vspace{0.1 in}

\mn{\bf I. Control the Distance Between $\rho_{h}(x,t)$ and $\hat \rho_{h}(x,t)$}

\mn To estimate the distance between the regularized empirical measure constructed from the interacting particle system and that constructed from the self-consistent process, we use a recent proved result by Huang and Liu \cite{Liu} that estimates the $l_h^p$ norm of $\hat X_{h,i}(t)-X_{h,i}(t)$. For any vector $\vec x=(x_1,x_2,\cdots, x_{N_h})$ and $p\ge1$, its $l_h^p$ norm is defined as 
\beq
\label{norm1}
\big|\vec x\big|_{l_h^p}=\left( h^d\sum_{i=1}^{N_h} |x_i|^p\right)^{\frac{1}{p}}. 
\eeq
According to Theorem 6.1 in \cite{Liu}, there exist a $p>1$ such that for all $0<h\le h_0$ with $h_0$ sufficiently small, there exists two positive constants $C$ and $C'$ depending on $t$, $p$, $d$, $U_F$, $\rho_0$ and the diameter of $D$. The following estimate holds true:
\beq
\label{distance 1}
P\left(\max_{0\le s\le t}\big| \hat X_{h,i}(s)-X_{h,i}(s)\big|_{l_h^p}< \Lambda h |\ln h| \right)\ge 1- h^{C\Lambda |\ln h|}
\eeq
for all $\Lambda\ge C'$. Then under the high probability event 
$$
E_h=\left\{ \max_{0\le s\le t}\big| \hat X_{h,i}(s)-X_{h,i}(s)\big|_{l_h^p}< C' h |\ln h| \right\},
$$
 we can use the $l_h^p$ norm of $ \hat X_{h,i}(t)-X_{h,i}(t)$ to estimate the distance between the empirical measures $\rho_{h}(x,t)$ and $\hat \rho_{h}(x,t)$. Details of this part can be found in Section 2. 

\vspace{0.1 in}

\mn{\bf II. Control the Distance Between $\rho(x,t)$ and $\hat \rho_{h}(x,t)$}

\mn In this second step we estimate the distance between the empirical measure $\hat \rho_{h}(x,t)$ constructed from the self-consistent process and the solution of the PDE. To estimate this distance, we have a theorem as follows:

\begin{theorem}
Let  $\{\hat X_{h,i}(t)\}_{i=1}^{N_h}$ be the self-consistent system and $\hat \rho_{h}(x,t)$ be the constructed regularized empirical measure with regularized parameter $\ep_h=h^{1/6d}$. Let $\rho$ be the solution of the corresponding mead field equation (\ref{PDE}) with initial density $\rho_0$. 
Then, there is a positive function $c_1(t), t> 0$ (will be specified in (\ref{FunC1})) dependent only on $t$, $\varphi$, and $\rho_0$, such that 
\beq
\label{MainRe2}
\begin{aligned}
P\Big(&\sup_{s\in [0,t]}\big (\| (\rho-\hat\rho_{h})(\cdot,s)\|^2+\int_0^ s\|\nabla(\rho-\hat\rho_{h})(\cdot,q)\|^2\, dq \big)\le c_1(t) \, h^{1/12d}   \Big)\ge 1-c_1(t)\, h^{1/12d}.
\end{aligned}
\eeq
where $C_0=2d L_F$ and $L_F$ is the Lipschitz constant of $F_0$.
\end{theorem}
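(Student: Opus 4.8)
The plan is to recast the comparison of $\hat\rho_h$ with $\rho$ as a parabolic energy estimate for a single error equation carrying a martingale forcing, and then to convert the resulting moment bound into \eqref{MainRe2} by Markov's inequality. The decisive structural fact is that each self-consistent particle $\hat X_{h,i}$ is driven by the \emph{exact} mean-field force $F(\cdot,s)$ of \eqref{PDE}; hence applying It\^o's formula to $\varphi_{\ep_h}(x-\hat X_{h,i}(s))$, multiplying by $h^d\rho_0(\theta_{i,h})$ and summing in $i$ gives the exact stochastic equation
\[ d\hat\rho_h=\tfrac12\Delta\hat\rho_h\,ds-\nabla\cdot G_h\,ds+dM_h, \]
with discrete flux $G_h(x,s)=\sum_i h^d\rho_0(\theta_{i,h})\varphi_{\ep_h}(x-\hat X_{h,i})F(\hat X_{h,i},s)$ and $L^2$-valued martingale $dM_h=-\sum_i h^d\rho_0(\theta_{i,h})\nabla\varphi_{\ep_h}(x-\hat X_{h,i})\cdot dB_i$. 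Subtracting from \eqref{PDE} and setting $e=\rho-\hat\rho_h$, the key splitting $\rho F-G_h=eF+\mathcal E_h$, with consistency remainder $\mathcal E_h=\sum_i h^d\rho_0(\theta_{i,h})\varphi_{\ep_h}(x-\hat X_{h,i})\,[F(x,s)-F(\hat X_{h,i},s)]$, produces the exact error equation
\[ de=\tfrac12\Delta e\,ds-\nabla\cdot(eF)\,ds-\nabla\cdot\mathcal E_h\,ds-dM_h. \]
Because $\varphi_{\ep_h}$ is supported in a ball of radius $\sim\ep_h$ and $F$ is Lipschitz with constant $L_F$, one has $|\mathcal E_h|\lesssim L_F\ep_h\,\hat\rho_h$ pointwise. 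Crucially, there is no mean-field coupling error: the self-consistent system and the PDE share the same $F$, so the only inhomogeneities are $\mathcal E_h$, the martingale $M_h$, and the initial quadrature--mollification error $e(\cdot,0)=\rho_0-\hat\rho_h(\cdot,0)$.

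Next I would run the energy estimate. It\^o's formula for $\|e\|^2$ gives $d\|e\|^2=2\langle e,\tfrac12\Delta e-\nabla\cdot(eF)-\nabla\cdot\mathcal E_h\rangle\,ds-2\langle e,dM_h\rangle+d\langle M_h\rangle$, where the last term is the trace quadratic variation. Integration by parts gives $\langle e,\tfrac12\Delta e\rangle=-\tfrac12\|\nabla e\|^2$, which furnishes the dissipation responsible for the $L^2(H^1)$ part of the norm; $-\langle e,\nabla\cdot(eF)\rangle=-\tfrac12\int(\nabla\cdot F)\,e^2$ is bounded by $\tfrac12\|\nabla\cdot F\|_\infty\|e\|^2$, finite because $F_0\in H^{2d+2}$; and $-\langle e,\nabla\cdot\mathcal E_h\rangle=\langle\nabla e,\mathcal E_h\rangle\le\tfrac14\|\nabla e\|^2+\|\mathcal E_h\|^2$ by Young, with the gradient piece absorbed into the dissipation and $\|\mathcal E_h\|^2\lesssim\ep_h^2(\|e\|^2+\|\rho\|^2)$. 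Using the independence of the $B_i$, the cross terms in $\langle M_h\rangle$ vanish and a scaling computation yields $d\langle M_h\rangle=\|\nabla\varphi\|^2\,\ep_h^{-d-2}h^{2d}\sum_i\rho_0(\theta_{i,h})^2\,ds$, a Riemann sum comparable to $\|\nabla\varphi\|^2\ep_h^{-d-2}h^d\|\rho_0\|^2\,ds$; this is exactly where the $h^d\sim 1/N$ gain of the law of large numbers enters.

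To close, I would take $\sup_{s\le t}$ and expectations. The stochastic integral $\int_0^s\langle e,dM_h\rangle$ is handled by Burkholder--Davis--Gundy: moving the derivative off $\varphi_{\ep_h}$ shows its bracket is $\lesssim\ep_h^{-d}h^d\int_0^t\|\nabla e\|^2\,ds$, so $\mathbb E\sup_s|\int_0^s\langle e,dM_h\rangle|\lesssim(\ep_h^{-d}h^d)^{1/2}\,\mathbb E(\int_0^t\|\nabla e\|^2\,ds)^{1/2}$, and a further Young inequality absorbs another fraction of $\int\|\nabla e\|^2$ into the dissipation at the cost of an $O(\ep_h^{-d}h^d)$ term; a stopping-time localization is needed to justify these absorptions a priori. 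Collecting the remaining inhomogeneities and applying Gr\"onwall gives $\mathbb E\sup_{s\le t}(\|e(s)\|^2+\int_0^s\|\nabla e\|^2\,dq)\le A(t)\,h^{1/6d}$, where $A(t)$ depends only on $t$, $\varphi$ and $\rho_0$, and where the worst of the initial-quadrature, mollification and martingale scalings is balanced precisely by the choice $\ep_h=h^{1/6d}$. Finally, with $Y=\sup_{s\le t}(\|(\rho-\hat\rho_h)(\cdot,s)\|^2+\int_0^s\|\nabla(\rho-\hat\rho_h)\|^2\,dq)$ and $c_1(t)=\sqrt{A(t)}$, Markov's inequality gives $P(Y\ge c_1(t)h^{1/12d})\le\mathbb E Y/(c_1(t)h^{1/12d})\le c_1(t)h^{1/12d}$, which is exactly \eqref{MainRe2}.

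The hard part will be the rigorous stochastic energy estimate: making It\^o's formula precise for the $L^2(\mathbb R^d)$-valued semimartingale $\hat\rho_h$, correctly identifying the trace quadratic-variation term, and running the BDG bound together with the two Young absorptions while keeping every absorbed gradient term strictly below the $\tfrac12\|\nabla e\|^2$ supplied by the heat dissipation, all under a stopping time that must subsequently be removed. The companion deterministic estimates for $e(\cdot,0)$ and $\|\mathcal E_h\|$, with the correct powers of $h$ and $\ep_h$, are more routine but are where the hypotheses $\rho_0\in H^{k}$ with $k\ge\tfrac32 d+2$ and $F_0\in L^1\cap H^{2d+2}$ are consumed; these pin down the exponent $1/6d$ and hence the final rate.
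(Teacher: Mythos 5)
Your proposal is correct, but it reaches \eqref{MainRe2} by a genuinely different route than the paper, and the difference is worth spelling out. Your error equation for $e=\rho-\hat\rho_h$ is exactly the paper's Proposition \ref{Decompose} repackaged as an SPDE (same free-energy term, same consistency remainder $\mathcal E_h$, same trace quadratic variation $h^{2d}s\|\nabla\varphi_{\ep_h}\|^2\sum_i\rho_0(\theta_{i,h}h)^2$, same martingale). The divergence is in how $\int_0^s\|\mathcal E_h\|^2$ is controlled. The paper expands it into pairwise terms $R_{i,j}$ supported on $\{|\hat X_{h,i}-\hat X_{h,j}|\le 2\ep_h\}$, takes expectations, and must then prove a separation estimate for the self-consistent particles (Proposition \ref{Lemma 4.1}), which is done via Girsanov reduction to Brownian motions and a case analysis $d=1$, $d=2$, $d\ge 3$ (Sections 4--5). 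You instead use the pointwise bound $|\mathcal E_h|\le C_d L_F\ep_h\,\hat\rho_h$ (valid since $\varphi_{\ep_h}$ has support of radius $\sim\ep_h$ and $F$ has Lipschitz constant $\le L_F$) together with the triangle inequality $\|\hat\rho_h\|\le\|e\|+\|\rho\|$, so that the truncation error becomes $\lesssim \ep_h^2(\|e\|^2+\|\rho\|^2)$ pathwise; the $\ep_h^2\|e\|^2$ piece is harmless inside Gr\"onwall and the rest is deterministic. This bypasses the Girsanov/separation machinery entirely, is purely pathwise, and in fact gives $O(\ep_h^2)$ where the paper's route gives $O(\ep_h)$. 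Correspondingly, your endgame differs: the paper bounds second moments of $M_s$ and $\tilde M_s$, builds high-probability events $A_T, A_M, A_{\tilde M}$ via Doob--Chebyshev, and runs a pathwise Gr\"onwall on their intersection, whereas you run Gr\"onwall in expectation with a BDG bound on $\mathbb E\sup_s|\int_0^s\langle e,dM_h\rangle|$ and finish with one Markov inequality. Both are sound; what the paper's approach buys is the separation proposition itself (a statement of independent interest, not contingent on the self-feedback trick), while yours buys elementarity and brevity.

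Two care points for your write-up. First, the BDG absorption must be ordered correctly relative to Gr\"onwall: first bound $\mathbb E\|e(s)\|^2$ and $\mathbb E\int_0^t\|\nabla e\|^2$ from the energy identity without the supremum (plain expectation annihilates the martingale), and only then do the supremum/BDG step; absorbing a fraction of $\mathbb E\int_0^t\|\nabla e\|^2$ \emph{after} multiplying by the Gr\"onwall factor $e^{Ct}$ would fail. The localization you flag is actually painless here, since for fixed $h$ all integrands are bounded pathwise, so the stochastic integrals are genuine square-integrable martingales. Second, the initial error $\|e(\cdot,0)\|^2$, which you defer as routine, is indeed a deterministic quadrature--mollification estimate, but it is a nontrivial piece of the argument (the paper's Lemma \ref{initial lemma}) and must be carried out to get the $O(\ep_h^2)$ bound your rate accounting assumes.
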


\mn The proof of Theorem 2 is similar to the one reported recently in the authors' conference note \cite{Liu4}, where some preliminary work of this paper is reported with a much simplified system with only drift and diffusion but no interactions. In that case, the mean-field PDE is the Fokker-Planck equation and there are no mass function on each data points in the empirical measure. Here we generalized the proof and make it adapted to the new definition of empirical measure in this paper and to the self-consistent system. To prove this theorem, we take the following steps: 
\begin{enumerate}[(1)]
\item First we use Ito's formula to decompose the $L^\infty(L^2)\cap L^2(H^1)$ norm of the difference between $\rho(x,t)$ and $\hat \rho_{h}(x,t)$. Here we will have a term that is from the free energy estimation of PDE, a term of initial error, a term of truncation error, and a term of martingale error. Details can be found in Section 3.
\item Second, we prove a proposition on the separation of the self-consistent system which shows that for the self-consistent system $\{\hat X_{h,i}(t)\}_{i=1}^{N_h}$, there cannot be too many particles stay too close with each other. To prove this separation problem, we use Girsanov Theorem to reduce it to a separation problem of standard Brownian motions. The proof of the Brownian motion case is technical, where cases for $d=2$, $d=2$ or $d\ge3$ will be proved differently. Details can be found in Section 4.   
\item Then we estimate the term of truncation error.  We are able to use the result we proved in the proposition of separation and the fact that $\varphi_{\ep_h}$ is supported on $\{x: |x|_\infty<\ep_h\}$ to bounded the truncation error under a high probability event. Details can be found in Section 5. 
\item Since the empirical measure is rescaled by $h^d$, we can use standard stochastic differential equation argument to estimate the martingale errors in the estimation. Details can be found in Section 6. 
\item Note that $\varphi\in C_0^\infty$ and that the initial density $\rho_0$ is Lipschitz continuous. We can estimate the initial error using standard calculations. Details can be found in Section 7. 
\item After we have estimated the initial, truncation and the martingale errors, we can use Gronwall's inequality to estimate the distance between the empirical measure $\hat \rho_{h}(x,t)$ and the solution of the PDE and finish the proof.  Details can be found in Section 8.
\end{enumerate}

\mn Combining Part I and Part II, we finish the proof of Theorem 1.

\subsection{Newton and Coulomb Interactions}
With Theorem 1 holds true, when the interacting function $F_0$ is not bounded and Lipschitz continuous, this convergence result may remain hold. The intuition behind this generalization is that, though $F_0$ itself is not bounded and Lipschitz continuous, the function $F$ defined in \eqref{PDE} may still be bounded and Lipschitz continuous (in a certain interval). Thus the SDE of self-consistent system in \eqref{self-consistent} is still well defined and has a unique strong solution. Note that the proof of Theorem 2 depends only on the fact that $F$ rather than $F_0$ is bounded and Lipschitz continuous against $x$. We are still able to estimate the distance between the regularized empirical measure $\hat\rho_{h}(x,t)$ of the self-consistent system, and the solution of the PDE.

\mn Thus, to show the convergence result in Theorem 1, it suffices to estimate the distance between the regularized empirical measure $\hat\rho_{h}(x,t)$ of the self-consistent system and the regularized empirical measure $\rho_{h,\delta_h}(x,t)$ of the regularized interacting particle system. Fortunately, the results recently proved in \cite{Liu} give us exactly the same estimation as in \eqref{distance 1}, between of the $l_h^p$ distance between $\hat X_{h,i}(s)$ and $X_{h,i,\delta_h}(s)$, when the function $F_0$ is Coulomb or Newton Interactions. Thus exactly the same argument as in Section 2 will finish the proof for those systems. 

\mn{\bf Newton Interaction.} In this case, the aggregation function is given by $F_0(x)=\nabla \Phi(x)$, $\forall x\in \mathbb{R}^d-\{0\}$, where
$$
\Phi(x)=\left\{
\begin{aligned}
-\frac{1}{2\pi}\ln|x|&, \ \ d=2\\
\frac{C_d}{|x|^{d-2}}&, \ \  d\ge 3. 
\end{aligned}
\right.
$$
And the mean-field PDE is the Keller-Segel equation. Noting that $\rho_0(x)\in H^k(\mathbb{R}^d)$ for some $k\ge \frac{3}{2}d+1$, this implies the existence of the unique local solution to the Keller-Segel equation with the follow regularities
\beq
\label{regularity1}
\|\rho\|_{L^\infty(0,T,H^k(\mathbb{R}^d))}\le C(\|\rho_0\|_{H^k(\mathbb{R}^d)})
\eeq
and
\beq
\label{regularity2}
\|\partial_t\rho\|_{L^\infty(0,T,H^{k-2}(\mathbb{R}^d))}\le C(\|\rho_0\|_{H^k(\mathbb{R}^d)})
\eeq
where $T>0$ depends only on $\|\rho_0\|_{H^k(\mathbb{R}^d)}$. Denote $T_{\max}$ to be the largest existence time such that \eqref{regularity1} and \eqref{regularity2} is valid. According to Sobolev imbedding theorem, one has $\rho(x,t)\in C^{k-d/2-1}$ for any $t\in [0, T]$. And for 
$$
F(x,t)=\int_{\mathbb{R}^d}F_0(x-y)\rho(y,t)dy
$$
using the Sobolev imbedding theorem again gives us 
$$
\|F\|_{L^\infty(0,T,W^{k-d/2-2,\infty}(\mathbb{R}^d))}\le C \|F\|_{L^\infty(0,T,H^{k+1}(\mathbb{R}^d))}\le C(\|\rho_0\|_{H^k(\mathbb{R}^d)})
$$
and 
$$
\|\partial_t F\|_{L^\infty(0,T,W^{k-d/2-2,\infty}(\mathbb{R}^d))}\le C(\|\rho_0\|_{H^k(\mathbb{R}^d)}).
$$
Thus for any $T<T_{\max}$, $F(x,s)$ is a bounded and Lipschitz continuous on $\mathbb{R}^d\times[0,T]$, with Lipschitz constants uniformly bounded.  Thus, Theorem 2 in this paper holds for the regularized empirical measure $\hat\rho_{h}(x,t)$ of the self-consistent system, and the solution of the corresponding Keller-Segel equation. 

\mn With the distance between the self-consistent system and the mean-field PDE estimated, let 
\beq
\label{Delta h}
\delta_h=h^\kappa
\eeq
where $\kappa\in (1/2,1)$. Then according to Theorem 1.1 in \cite{Liu}, we have for $p>d/(1-\kappa)$, and $h$ sufficiently small, there exists two positive constants $C$ and $C'$ depending on $T_{\max}$, $p$, $d$ and $\rho_0$ and the diameter of $D$. The following estimate holds true:
$$
P\left(\max_{0\le s\le t}\big| \hat X_{h,i}(s)-X_{h,i}(s)\big|_{l_h^p}< \Lambda h |\ln h| \right)\ge 1- h^{C\Lambda |\ln h|}
$$
for all $\Lambda\ge C'$. Noting that the inequality above has the same form as \eqref{distance 1}, then the argument in Section 2 gives the estimation between $\hat X_{h,i}(s)$ and $X_{h,i,\delta_h}(s)$ and gives us the following corollary on the convergence of system with Newton interaction:
\begin{corollary}
For any $t<T_{\max}$, suppose $F_0(x)$ is given by the Newton Interaction. Let $\{X_{h,i,\delta_h}(s)\}_{i=1}^{N_h}$ be the regularized interacting particle system defined in \eqref{regularized particle system} with $\delta_h$ defined in \eqref{Delta h}, and $\rho_{h,\delta_h}$ be the constructed regularized empirical measure (\ref{empirical1 regular}) with regularized parameter $\ep_h=h^{1/6d}$. Let $\rho$ be the solution of the corresponding Keller-Segel equation with initial density $\rho_0$. 
Then, there is a positive function $c(t)$ (will be specified in (\ref{FunC})) dependent only on $t$, $\varphi$, $L_F$ and $||\rho_0||$, and a $h_0>0$, such that
\beq
\begin{aligned}
P\Big(&\sup_{s\in [0,t]}\big (\| (\rho-\rho_{h,\delta_h})(\cdot,s)\|^2+\int_0^ s\|\nabla(\rho-\rho_{h,\delta_h})(\cdot,q)\|^2\, dq \big ) <c(t) h^{1/12d}  \Big)\ge 1-c(t)\, h^{1/12d}
\end{aligned}
\eeq
for all $0<h\le h_0$. 
\end{corollary}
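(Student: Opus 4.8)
The plan is to deduce the corollary purely by assembling the two ingredients already prepared in the paper, joined by a triangle inequality and a union bound; no new estimate on the empirical measures themselves is needed. For a function $f(x,s)$ I would write
\[
\mathcal{N}_s(f)=\|f(\cdot,s)\|^2+\int_0^s\|\nabla f(\cdot,q)\|^2\,dq,
\]
so that the quantity controlled in the statement is $\sup_{s\in[0,t]}\mathcal{N}_s(\rho-\rho_{h,\delta_h})$. For each fixed $s$ the map $f\mapsto\sqrt{\mathcal{N}_s(f)}$ is a norm (it is the square root of a sum of two squared $L^2$ norms), so Minkowski's inequality applies. Inserting the self-consistent process via $\rho-\rho_{h,\delta_h}=(\rho-\hat\rho_h)+(\hat\rho_h-\rho_{h,\delta_h})$ and using $(a+b)^2\le 2a^2+2b^2$ then gives
\[
\sup_{s\in[0,t]}\mathcal{N}_s(\rho-\rho_{h,\delta_h})\le 2\sup_{s\in[0,t]}\mathcal{N}_s(\rho-\hat\rho_h)+2\sup_{s\in[0,t]}\mathcal{N}_s(\hat\rho_h-\rho_{h,\delta_h}),
\]
reducing the problem to controlling the two terms on the right separately.

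For the first term I would invoke Theorem 2. The point to check is that Theorem 2 is applicable here, and its proof uses only that the mean-field force $F$ (not $F_0$) is bounded and Lipschitz in $x$. For the Newton interaction this is exactly what the local well-posedness of the Keller--Segel equation supplies: as detailed just before the corollary, estimates \eqref{regularity1}--\eqref{regularity2} together with the Sobolev embedding show that for every $t<T_{\max}$ the force $F(\cdot,s)$ and $\partial_s F(\cdot,s)$ are uniformly bounded in $W^{k-d/2-2,\infty}(\mathbb{R}^d)$ on $[0,t]$, so $F$ is bounded and Lipschitz in $x$ on $\mathbb{R}^d\times[0,t]$ with a constant $L_F=L_F(t)$. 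Hence the self-consistent SDE \eqref{self-consistent} is well posed and Theorem 2 applies verbatim to $\hat\rho_h$ and the Keller--Segel solution $\rho$, yielding a function $c_1(t)$ with
\[
P\Big(\sup_{s\in[0,t]}\mathcal{N}_s(\rho-\hat\rho_h)\le c_1(t)\,h^{1/12d}\Big)\ge 1-c_1(t)\,h^{1/12d}.
\]

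For the second term I would exploit that $\hat\rho_h$ in \eqref{empirical2} and $\rho_{h,\delta_h}$ in \eqref{empirical1 regular} are built from the \emph{same} weights $h^d\rho_0(\theta_{i,h})$ and the \emph{same} blob $\varphi_{\ep_h}$, differing only in the centers $\hat X_{h,i}$ versus $X_{h,i,\delta_h}$; the comparison is therefore governed entirely by the displacements $\hat X_{h,i}(s)-X_{h,i,\delta_h}(s)$, exactly as in the comparison of $\rho_h$ and $\hat\rho_h$ in Part I. Choosing $\delta_h=h^\kappa$ with $\kappa\in(1/2,1)$ as in \eqref{Delta h} and $p>d/(1-\kappa)$, Theorem 1.1 of \cite{Liu} provides a bound of the same form as \eqref{distance 1},
\[
P\Big(\max_{0\le s\le t}\big|\hat X_{h,i}(s)-X_{h,i,\delta_h}(s)\big|_{l_h^p}<\Lambda h|\ln h|\Big)\ge 1-h^{C\Lambda|\ln h|},\qquad \Lambda\ge C'.
\]
On the event $E_h=\{\max_{0\le s\le t}|\hat X_{h,i}(s)-X_{h,i,\delta_h}(s)|_{l_h^p}<C'h|\ln h|\}$, I would run the Section 2 argument unchanged: it estimates the empirical-measure difference through $\|\nabla\varphi_{\ep_h}\|$ and the $l_h^p$ displacement with $\ep_h=h^{1/6d}$, and uses only the \emph{form} of the displacement bound, not the specific kernel. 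This gives $\sup_{s\in[0,t]}\mathcal{N}_s(\hat\rho_h-\rho_{h,\delta_h})\le c_2(t)h^{1/12d}$, while $P(E_h)\ge 1-h^{CC'|\ln h|}$ with $h^{CC'|\ln h|}=o(h^{1/12d})$ as $h\to 0$.

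Finally I would intersect the two events and set $c(t)=2\big(c_1(t)+c_2(t)\big)+1$, which delivers both the bound $\sup_{s\in[0,t]}\mathcal{N}_s(\rho-\rho_{h,\delta_h})<c(t)h^{1/12d}$ and, by the union bound, the failure probability $\le c_1(t)h^{1/12d}+h^{CC'|\ln h|}\le c(t)h^{1/12d}$ for $h$ small. The hard part is not this assembly but the single step that makes Theorem 2 applicable, namely controlling the attractive Newton dynamics so that the mean-field force $F$ stays bounded and Lipschitz. Because the Keller--Segel flow may undergo chemotactic blow-up, the Sobolev bounds \eqref{regularity1}--\eqref{regularity2}, and hence the constants $L_F(t)$ and $c(t)$, are available only for $t<T_{\max}$ and deteriorate as $t\uparrow T_{\max}$ --- which is precisely why the statement restricts to $[0,t]$ with $t<T_{\max}$. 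The other essential input, the singular-kernel $l_h^p$ estimate of \cite{Liu}, I would import wholesale.
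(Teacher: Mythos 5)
Your proposal is correct and follows essentially the same route as the paper: it invokes Theorem 2 for the self-consistent-system-to-Keller--Segel comparison (justified, exactly as the paper does, by the regularity estimates \eqref{regularity1}--\eqref{regularity2} and Sobolev embedding making $F$ bounded and Lipschitz for $t<T_{\max}$), imports Theorem 1.1 of \cite{Liu} for the $l_h^p$ displacement bound between $\hat X_{h,i}$ and $X_{h,i,\delta_h}$, reruns the Section 2 argument on that bound, and combines the two high-probability events. Your bookkeeping is in fact slightly more careful than the paper's (the explicit $(a+b)^2\le 2a^2+2b^2$ factor, versus the paper's bare $c(t)=c_0(t)+c_1(t)$), but the substance is identical.
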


\mn {\bf Coulomb Interaction.}  In this case, the interaction function is given by $F_0(x)=-\nabla \Phi(x)$, $\forall x\in \mathbb{R}^d-\{0\}$, where
$$
\Phi(x)=\left\{
\begin{aligned}
-\frac{1}{2\pi}\ln|x|&, \ \ d=2\\
\frac{C_d}{|x|^{d-2}}&, \ \  d\ge 3. 
\end{aligned}
\right.
$$
And the mean-field PDE is the drift-diffusion equation. Thus again let $T_{\max}$ be the same largest existence time of a regular solution. According to \cite{Liu3}, $T_{\max}=\infty$ So again using Sobolev embedding theorem on 
$$
F(x,t)=\int_{\mathbb{R}^d}F_0(x-y)\rho(y,t)dx
$$
we have that for any $t>0$, $F(x,s)$ is bounded and Lipschitz continuous on $\mathbb{R}^d\times[0,t]$, with Lipschitz constants uniformly bounded. Thus, Theorem 2 in this paper holds for the regularized empirical measure $\hat\rho_{h}(x,t)$ of the self-consistent system, and the solution of the corresponding drift-diffusion equation.

\mn Moreover, according to exactly the same argument, see Remark 1.1 in \cite{Liu}, let $\delta_h$ be the same as defined in \eqref{Delta h}, there is a $p>d/(1-\kappa)$, and $h$ sufficiently small, there exists two positive constants $C$ and $C'$ depending on $T_{\max}$, $p$, $d$ and $\rho_0$ and the diameter of $D$. The following estimate holds true:
$$
P\left(\max_{0\le s\le t}\big| \hat X_{h,i}(s)-X_{h,i}(s)\big|_{l_h^p}< \Lambda h |\ln h| \right)\ge 1- h^{C\Lambda |\ln h|}
$$
for all $\Lambda\ge C'$. Thus we have the following corollary on the convergence of system with Coulomb interaction:
\begin{corollary}
For any $t>0$, suppose $F_0(x,s)$ is given by the Coulomb Interaction. Let $\{X_{h,i,\delta_h}(s)\}_{i=1}^{N_h}$ be the regularized interacting particle system defined in \eqref{regularized particle system} with $\delta_h$ defined in \eqref{Delta h}, and $\rho_{h,\delta_h}$ be the constructed regularized empirical measure (\ref{empirical1 regular}) with regularized parameter $\ep_h=h^{1/6d}$. Let $\rho$ be the solution of the corresponding drift-diffusion equation with initial density $\rho_0$. 
Then, there is a positive function $c(t)$ (will be specified in (\ref{FunC})) dependent only on $t$, $\varphi$, $L_F$ and $||\rho_0||$, and a $h_0>0$, such that
\beq
\begin{aligned}
P\Big(&\sup_{s\in [0,t]}\big (\| (\rho-\rho_{h,\delta_h})(\cdot,s)\|^2+\int_0^ s\|\nabla(\rho-\rho_{h,\delta_h})(\cdot,q)\|^2\, dq \big ) <c(t) h^{1/12d}  \Big)\ge 1-c(t)\, h^{1/12d}
\end{aligned}
\eeq
for all $0<h\le h_0$. 
\end{corollary}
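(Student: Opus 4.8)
The plan is to reproduce, almost verbatim, the two-part scheme that proves Theorem 1, with the plain system replaced by the regularized system \eqref{regularized particle system} and with the global regularity of the repulsive (drift-diffusion) equation supplying the time-uniform control of $F$. Inserting the self-consistent empirical measure $\hat\rho_{h}$ from \eqref{empirical2} as an intermediary, I would bound, up to an absolute constant absorbed into $c(t)$,
$$
\|(\rho-\rho_{h,\delta_h})(\cdot,s)\|^2+\int_0^s\|\nabla(\rho-\rho_{h,\delta_h})(\cdot,q)\|^2\,dq
$$
by the sum of the same quantity for $\rho-\hat\rho_{h}$ and for $\hat\rho_{h}-\rho_{h,\delta_h}$. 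The first summand is handled by Theorem 2, the second by the Section 2 argument, and a union bound over the two high-probability events then yields the stated estimate with rate $h^{1/12d}$, the rate from Theorem 2 being the binding one.

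First I would verify that Theorem 2 applies even though $F_0=-\nabla\Phi$ is singular. As emphasized in the discussion preceding the statement, the proof of Theorem 2 uses only that $F(x,t)$ is bounded and Lipschitz in $x$ with uniformly bounded constants, never that $F_0$ is. For the Coulomb interaction the mean-field PDE is the drift-diffusion equation, whose regular solution is \emph{global}: by \cite{Liu3} one has $T_{\max}=\infty$, so the Sobolev bounds \eqref{regularity1}--\eqref{regularity2} hold on every interval $[0,t]$. Applying Sobolev embedding to $F=F_0*\rho$ gives $F\in L^\infty(0,t,W^{k-d/2-2,\infty}(\mathbb{R}^d))$ together with the analogous bound on $\partial_t F$, hence $F$ is bounded and uniformly Lipschitz on $\mathbb{R}^d\times[0,t]$. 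This is precisely the hypothesis Theorem 2 requires, so the self-consistent particles $\hat X_{h,i}$ defined by \eqref{self-consistent} are well defined and independent, and $\rho-\hat\rho_{h}$ obeys an estimate of the form \eqref{MainRe2}.

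Second I would control $\hat\rho_{h}-\rho_{h,\delta_h}$ through the coupling estimate of \cite{Liu}. Taking $\delta_h=h^\kappa$ with $\kappa\in(1/2,1)$, Theorem 1.1 of \cite{Liu} together with its Remark 1.1 for the Coulomb kernel supplies, for every $p>d/(1-\kappa)$ and $h$ small, an $l_h^p$ bound of exactly the form \eqref{distance 1} comparing $\hat X_{h,i}(s)$ with the regularized particle $X_{h,i,\delta_h}(s)$; a single such estimate simultaneously absorbs the mean-field fluctuation and the mollification error $F_0\to F_{0,\delta_h}$. On the event where $\max_{0\le s\le t}|\hat X_{h,i}(s)-X_{h,i,\delta_h}(s)|_{l_h^p}<C'h|\ln h|$, the Lipschitz continuity of $\varphi_{\ep_h}$ and the weights $h^d\rho_0(\theta_{i,h})$ let me run the Section 2 estimate unchanged and bound $\|\hat\rho_{h}-\rho_{h,\delta_h}\|$ by a power of $h$ of smaller order than the target $h^{1/12d}$.

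The main obstacle, and essentially the only non-mechanical point, is the time-uniformity underlying the claim ``for any $t>0$''. One must confirm that $T_{\max}=\infty$ genuinely propagates the needed regularity, namely that $\|\rho(\cdot,s)\|_{H^k}$ remains finite on each fixed $[0,t]$ so that the Lipschitz constant of $F$ is uniform in $s\in[0,t]$, and that the constant $C'$ entering the \cite{Liu} coupling does not grow with $t$ fast enough to overwhelm the $h$-power savings. Once these time-uniform bounds on $F$ and on $C'$ are in place, the two estimates combine by a union bound and a triangle inequality exactly as in the proof of Theorem 1; the sole difference from the Newton case is that $t$ now ranges over all of $(0,\infty)$ rather than over $(0,T_{\max})$.
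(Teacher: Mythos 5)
Your proposal follows essentially the same route as the paper: the same intermediate self-consistent system $\hat\rho_h$, Theorem 2 applied via the global-in-time regularity of the repulsive drift-diffusion equation ($T_{\max}=\infty$ by \cite{Liu3}, hence $F$ bounded and uniformly Lipschitz on $\mathbb{R}^d\times[0,t]$), the coupling estimate of Theorem 1.1/Remark 1.1 in \cite{Liu} with $\delta_h=h^\kappa$, $\kappa\in(1/2,1)$, fed into the Section 2 argument, and a union bound plus triangle inequality as in the proof of Theorem 1. The ``time-uniformity'' concern you flag is not actually an obstacle, since both $c(t)$ and $h_0$ are allowed to depend on the fixed $t$; otherwise your argument matches the paper's.
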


\section{The $L^\infty(L^2)\cap L^2(H^1)$ Distance between $\rho_h(x,t)$ and $\hat \rho_{h}(x,t)$}

According to Theorem 6.1 in \cite{Liu}, it has been proved that let $\{ X_{h,i}(t)\}_{i=1}^{N_h}$ and $\{\hat X_{h,i}(t)\}_{i=1}^{N_h}$ be the original interacting particle system and the self-consistent system with initial values of $\{\theta_{i,h}h\}_{i=1}^{N_h}$, which are specified in \eqref{particle system} and \eqref{self-consistent}, then  there exist a $p>1$ such that for all $0<h\le h_0$ with $h_0$ sufficiently small, there exists two positive constants $C$ and $C'$ depending on $t$, $p$, $d$ and $\rho_0$ and the diameter of $D$. The following estimate holds true for all $\Lambda >C'$:
$$
P\left(\max_{0\le s\le t}\big| \hat X_{h,i}(s)-X_{h,i}(s)\big|_{l_h^p}< \Lambda h |\ln h| \right)\ge 1- h^{C\Lambda |\ln h|}
$$
where the $l_h^p$ norm is defined in \eqref{norm1}. Then under the high probability event 
$$
E_h=\left\{ \max_{0\le s\le t}\big| \hat X_{h,i}(s)-X_{h,i}(s)\big|_{l_h^p}< C' h |\ln h| \right\},
$$
we will use the estimation of the distance between $\{ X_{h,i}(s)\}_{i=1}^{N_h}$ and $\{\hat X_{h,i}(s)\}_{i=1}^{N_h}$ to estimate the distance between the two empirical measures constructed from them. We have the theorem as follows: 

\begin{theorem}
Let $\{X_{h,i}(t)\}_{i=1}^{N_h}$ be the interacting particle system defined in \eqref{particle system} and $\{\hat X_{h,i}(t)\}_{i=1}^{N_h}$ be the self-consistent system defined in \eqref{self-consistent}. $\rho_{h}(x,t)$ and $\hat \rho_{h}(x,t)$ be the constructed regularized empirical measure respectively, with regularized parameter $\ep_h=h^{1/6d}$. Then, there is a positive function $c_0(t), t> 0$ (will be specified in (\ref{C0})) dependent only on $t$, $\varphi$, $U_F$, $L_F$, and $\rho_0$, such that 
\beq
\label{MainRe3}
\begin{aligned}
P\Big(&\sup_{s\in [0,t]}\big (\| (\rho_h-\hat\rho_{h})(\cdot,s)\|^2+\int_0^ s\|\nabla(\rho_h-\hat\rho_{h})(\cdot,q)\|^2\, dq \big )< c_0(t) \, h^{1/12d}  \Big)\ge 1-c_0(t)\, h^{1/12d}.
\end{aligned}
\eeq
\end{theorem}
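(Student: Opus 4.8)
The plan is to work entirely on the high-probability event $E_h$ defined above, on which the input estimate \eqref{distance 1} (Theorem 6.1 of \cite{Liu}) controls the particle displacements in the $l_h^p$ norm, and to convert this into control of the two regularized empirical measures in the $L^\infty(L^2)\cap L^2(H^1)$ norm. Since both systems are driven by the same Brownian motions from the same initial data, the only discrepancy comes from replacing the true interaction force by the mean-field force $F$, and \eqref{distance 1} already packages this into $\max_{0\le s\le t}|\hat X_{h,i}(s)-X_{h,i}(s)|_{l_h^p}<C'h|\ln h|$. Thus the whole task reduces to a deterministic, pointwise-in-time estimate: given a list of particle displacements small in $l_h^p$, bound $\|(\rho_h-\hat\rho_h)(\cdot,s)\|$ and $\|\nabla(\rho_h-\hat\rho_h)(\cdot,s)\|$. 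The supremum and time integral over $[0,t]$ then come for free, since $E_h$ controls the $\max_{0\le s\le t}$.

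For the deterministic step I would write $(\rho_h-\hat\rho_h)(x,s)=\sum_i h^d\rho_0(\theta_{i,h})\big[\varphi_{\ep_h}(x-X_{h,i}(s))-\varphi_{\ep_h}(x-\hat X_{h,i}(s))\big]$ and expand each bracket by the fundamental theorem of calculus, $\varphi_{\ep_h}(x-X_{h,i})-\varphi_{\ep_h}(x-\hat X_{h,i})=-\int_0^1\nabla\varphi_{\ep_h}(x-\hat X_{h,i}-\tau(X_{h,i}-\hat X_{h,i}))\,d\tau\cdot(X_{h,i}-\hat X_{h,i})$. Taking $L^2$ norms in $x$, using translation invariance of the $L^2$ norm to discard the intermediate shift, and then the triangle inequality over $i$, yields $\|(\rho_h-\hat\rho_h)(\cdot,s)\|\le\|\nabla\varphi_{\ep_h}\|\sum_i h^d\rho_0(\theta_{i,h})|X_{h,i}(s)-\hat X_{h,i}(s)|$. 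Since $\rho_0$ is bounded and, by \eqref{Distance1}, $h^dN_h\le U_D$, a Hölder inequality passes the weighted $l_h^1$ sum to the $l_h^p$ norm: $\sum_i h^d\rho_0(\theta_{i,h})|X_{h,i}-\hat X_{h,i}|\le\|\rho_0\|_\infty U_D^{1-1/p}|X-\hat X|_{l_h^p}$. The gradient term is handled identically, one derivative higher, with $\|\nabla\varphi_{\ep_h}\|$ replaced by $\|\nabla^2\varphi_{\ep_h}\|$.

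The remaining ingredient is the $\ep_h$-scaling of the mollifier norms, which is exactly where the choice $\ep_h=h^{1/6d}$ is used. Since $\varphi\in C_0^\infty$, a change of variables gives $\|\nabla\varphi_{\ep_h}\|\sim\ep_h^{-(d+2)/2}$ and $\|\nabla^2\varphi_{\ep_h}\|\sim\ep_h^{-(d+4)/2}$. Combined with $|X-\hat X|_{l_h^p}<C'h|\ln h|$ on $E_h$, the $L^2$ part is $\lesssim h^{1-(d+2)/(12d)}|\ln h|$ and the gradient part is $\lesssim h^{1-(d+4)/(12d)}|\ln h|$. The gradient carries the worse (smaller) exponent, so after squaring and integrating in time the binding bound is of order $h^{2(11d-4)/(12d)}(\ln h)^2$. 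I would then check that $2(11d-4)/(12d)>1/(12d)$ for every $d\ge1$, so the logarithm is absorbed and the whole quantity is $\le c_0(t)h^{1/12d}$ for all small $h$; collecting the constants $\|\rho_0\|_\infty$, $U_D$, $C'$, $\|\nabla\varphi\|$, $\|\nabla^2\varphi\|$ and the factor $t$ into $c_0(t)$ defines the function in \eqref{C0}.

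Finally, to turn the deterministic bound into the probability statement, note it holds on $E_h$, whose complement has probability at most $h^{CC'|\ln h|}=e^{-CC'(\ln h)^2}$ by \eqref{distance 1}; this is super-polynomially small, hence $\le c_0(t)h^{1/12d}$ once $h\le h_0$, giving $P(\cdots)\ge1-c_0(t)h^{1/12d}$. I expect the only genuine subtlety to be the exponent bookkeeping in the third paragraph: the gradient estimate carries the negative power $\ep_h^{-(d+4)/2}$, and one must verify it is beaten by the $h|\ln h|$ gain from \eqref{distance 1}, i.e. that $(d+4)/(12d)<1$. The triangle inequality over $i$ is lossy, as it ignores the overlap structure of the blobs, but the target exponent $1/12d$ is so conservative that this loss is harmless and no sharper, orthogonality-type estimate is required.
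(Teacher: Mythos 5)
Your proposal is correct and follows essentially the same route as the paper's proof: restrict to the event $E_h$ given by Theorem 6.1 of \cite{Liu}, convert the $l_h^p$ displacement bound into $L^2$ and $H^1$ bounds on the mollified empirical measures (your Minkowski/translation-invariance estimate $\|\nabla\varphi_{\ep_h}\|\sim\ep_h^{-(d+2)/2}$ yields exactly the same scaling as the paper's mean-value-theorem-times-support-volume argument), pass from the weighted $l^1$ sum to $l_h^p$ via Jensen/H\"older together with $h^dN_h\le U_D$, and absorb both the logarithmic factor and the complement probability $h^{CC'|\ln h|}=e^{-CC'(\ln h)^2}$ into $c_0(t)\,h^{1/12d}$. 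The only differences are cosmetic (you keep $|\ln h|$ explicit where the paper bounds $h|\ln h|\le h^{2/3}$, and your exponent bookkeeping squares the norms more carefully), so there is no gap.
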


\begin{proof}
Recall that 
$$
\rho_{h}(x,s)=\sum_{i=1}^{N_h} h^d \rho_0(\theta_{i,h}) \varphi_{\ep_h}\Big(x-X_{h,i}(s)\Big)
$$
and
$$
\hat \rho_{h}(x,s)=\sum_{i=1}^{N_h} h^d \rho_0(\theta_{i,h}) \varphi_{\ep_h}\left(x-\hat X_{h,i}(s)\right). 
$$
Then for any $s\in [0,t]$, the $L^2$ norm of the difference is given by  
\beq
\label{L21}
\begin{aligned}
\left\|\rho_{h}(x,s)-\hat \rho_{h}(x,s)\right\| &\le \sum_{i=1}^{N_h} h^d \rho_0(\theta_{i,h}) \left\| \varphi_{\ep_h}\Big(x-X_{h,i}(s)\Big)-\varphi_{\ep_h}\left(x-\hat X_{h,i}(s)\right) \right\|\\
&=\sum_{i=1}^{N_h} h^d \ep_{h}^{-d} \rho_0(\theta_{i,h}) \left\| \varphi\left(\frac{x-X_{h,i}(s)}{\ep_h}\right)-\varphi\left(\frac{x-\hat X_{h,i}(s)}{\ep_h}\right) \right\|.
\end{aligned}
\eeq
Note that since $\varphi\in C_0^\infty$, for any $i$, according to mid-value theorem
$$
\left| \varphi\left(\frac{x-X_{h,i}(s)}{\ep_h}\right)-\varphi\left(\frac{x-\hat X_{h,i}(s)}{\ep_h}\right) \right|\le c_d\left| \frac{X_{h,i}(s)-\hat X_{h,i}(s)}{\ep_h}\right|\max_{k=1,\cdots,d}\left\|\frac{\partial\varphi}{\partial x_k}\right\|_\infty
$$
where $c_d$ is some constant that depends only on $d$, and that 
$$
\left| \varphi\left(\frac{x-X_{h,i}(s)}{\ep_h}\right)-\varphi\left(\frac{x-\hat X_{h,i}(s)}{\ep_h}\right) \right|\equiv 0
$$
for all $x\notin U_i=\{y: \big|y-X_{h,i}\big|\le \ep_h/2\}\cup \{y: \big|y-\hat X_{h,i}\big|\le \ep_h/2\}$. Thus
\beq
\label{L22}
\begin{aligned}
 \left\| \varphi\left(\frac{x-X_{h,i}(s)}{\ep_h}\right)-\varphi\left(\frac{x-\hat X_{h,i}(s)}{\ep_h}\right) \right\|&\le c_d\left| \frac{X_{h,i}(s)-\hat X_{h,i}(s)}{\ep_h}\right|\max_{k=1,\cdots,d}\left\|\frac{\partial\varphi}{\partial x_k}\right\|_\infty \sqrt{\int_{U_i} dx}\\
 &\le 2c_d\left| \frac{X_{h,i}(s)-\hat X_{h,i}(s)}{\ep_h}\right|\max_{k=1,\cdots,d}\left\|\frac{\partial\varphi}{\partial x_k}\right\|_\infty \ep_h^{d/2}.
 \end{aligned}
\eeq
Plugging \eqref{L22} in \eqref{L21} and noting that $\rho_0$ is Lipschitz continuous and thus bounded, we have 
\beq
\label{L23}
\begin{aligned}
\left\|\rho_{h}(x,s)-\hat \rho_{h}(x,s)\right\| &\le 2c_d\|\rho_0\|_\infty \max_{k=1,\cdots,d}\left\|\frac{\partial\varphi}{\partial x_k}\right\|_\infty \ep_h^{-d/2-1}\big| \hat X_{h,i}(s)-X_{h,i}(s)\big|_{l_h^1}. 
\end{aligned}
\eeq
Noting that according to Jensen's inequality, for any $\vec x\in \mathbb{R}^{N_h}$ and $p\ge 1$ we always have
$$
\left(\frac{\sum_{i=1}^{N_h} |x_i|^p}{N_h}\right)^{\frac{1}{p}}\ge \frac{\sum_{i=1}^{N_h} |x_i|}{N_h}. 
$$
Combining this with \eqref{Distance1}, we have
\beq
\begin{aligned}
\big| \hat X_{h,i}(s)-X_{h,i}(s)\big|_{l_h^1}&\le \big(h^d N_h \big)^{1-p^{-1}}\big| \hat X_{h,i}(s)-X_{h,i}(s)\big|_{l_h^p}\\  
&\le U_D \big| \hat X_{h,i}(s)-X_{h,i}(s)\big|_{l_h^p}. 
\end{aligned}
\eeq
Plugging this inequality to \eqref{L23} and according to the definition of $E_h$, we have under event $E_h$:
\beq
\label{L24}
\begin{aligned}
\left\|\rho_{h}(x,s)-\hat \rho_{h}(x,s)\right\| &\le 2c_d\|\rho_0\|_\infty \max_{k=1,\cdots,d}\left\|\frac{\partial\varphi}{\partial x_k}\right\|_\infty \ep_h^{-d/2-1}U_D C' h |\ln h|\\
&\le 2c_d\|\rho_0\|_\infty \max_{k=1,\cdots,d}\left\|\frac{\partial\varphi}{\partial x_k}\right\|_\infty \ep_h^{-d/2-1}U_D C' h^{2/3}
\end{aligned}
\eeq
when $h$ is sufficiently small. Noting that $\ep_h=h^{1/6d}$, we have 
$$
\ep_h^{-d/2-1} h^{2/3}=h^{7/12-1/6d}< h^{1/12d}. 
$$
Combining this observation with the fact that \eqref{L24} holds true for all $s\in [0,t]$, we have 
\beq
\label{L25}
\sup_{s\in [0,t]}\left\|\rho_{h}(x,s)-\hat \rho_{h}(x,s)\right\| \le 2c_d\|\rho_0\|_\infty \max_{k=1,\cdots,d}\left\|\frac{\partial\varphi}{\partial x_k}\right\|_\infty U_D C' h^{1/6d}. 
\eeq
Then similarly for any $s\in [0,t]$ consider 
$$
\left\|\nabla\rho_{h}(x,s)-\nabla\hat \rho_{h}(x,s)\right\|\le \sum_{k=1}^d \left\|\frac{\partial \rho_{h}(x,s)}{\partial x_k}-\frac{\partial \hat \rho_{h}(x,s)}{\partial x_k}\right\|.
$$
Then for each $i$ we can similarly we have 
\begin{align*}
\left\|\frac{\partial \rho_{h}(x,s)}{\partial x_k}-\frac{\partial \hat \rho_{h}(x,s)}{\partial x_k}\right\|&\le \sum_{i=1}^{N_h} h^d \rho_0(\theta_{i,h}) \left\| \frac{\partial \varphi_{\ep_h}\Big(x-X_{h,i}(s)\Big)}{\partial x_k}-\frac{\partial \varphi_{\ep_h}\left(x-\hat X_{h,i}(s)\right) }{\partial x_k}\right\|\\
&=\sum_{i=1}^{N_h} h^d \ep_{h}^{-d-1} \rho_0(\theta_{i,h}) \left\| \frac{\partial \varphi}{\partial x_k}\left(\frac{x-X_{h,i}(s)}{\ep_h}\right)-\frac{\partial \varphi}{\partial x_k}\left(\frac{x-\hat X_{h,i}(s)}{\ep_h}\right) \right\|.
\end{align*}
Again according to mid-value theorem, we have 
$$
\left| \frac{\partial \varphi}{\partial x_k}\left(\frac{x-X_{h,i}(s)}{\ep_h}\right)-\frac{\partial \varphi}{\partial x_k}\left(\frac{x-\hat X_{h,i}(s)}{\ep_h}\right) \right|\le c_d\left| \frac{X_{h,i}(s)-\hat X_{h,i}(s)}{\ep_h}\right|\max_{j,k=1,\cdots,d}\left\|\frac{\partial^2\varphi}{\partial x_j \partial x_k}\right\|_\infty
$$
and 
$$
\left| \frac{\partial \varphi}{\partial x_k}\left(\frac{x-X_{h,i}(s)}{\ep_h}\right)-\frac{\partial \varphi}{\partial x_k}\left(\frac{x-\hat X_{h,i}(s)}{\ep_h}\right) \right|\equiv 0
$$
for all $x\notin U_i=\{y: \big|y-X_{h,i}\big|\le \ep_h/2\}\cup \{y: \big|y-\hat X_{h,i}\big|\le \ep_h/2\}$. Thus 
$$
\begin{aligned}
\left\| \frac{\partial \varphi}{\partial x_k}\left(\frac{x-X_{h,i}(s)}{\ep_h}\right)-\frac{\partial \varphi}{\partial x_k}\left(\frac{x-\hat X_{h,i}(s)}{\ep_h}\right) \right\|\le 2c_d\left| \frac{X_{h,i}(s)-\hat X_{h,i}(s)}{\ep_h}\right| \max_{j,k=1,\cdots,d}\left\|\frac{\partial^2\varphi}{\partial x_j \partial x_k}\right\|_\infty \ep_h^{d/2}
\end{aligned}
$$
which implies that 
\beq
\label{H11}
\left\|\frac{\partial \rho_{h}(x,s)}{\partial x_k}-\frac{\partial \hat \rho_{h}(x,s)}{\partial x_k}\right\|\le 2c_d\|\rho_0\|_\infty \max_{j,k=1,\cdots,d}\left\|\frac{\partial^2\varphi}{\partial x_j\partial x_k}\right\|_\infty \ep_h^{-d/2-2}U_D C' h^{2/3}
\eeq
when $h$ is sufficiently small. Again noting that $\ep_h=h^{1/6d}$, we have 
$$
\ep_h^{-d/2-2} h^{2/3}=h^{7/12-1/3d}< h^{1/12d}.
$$
And note that \eqref{H11} holds for all $k=1,2,\cdots, d$. Thus for any $s\in [0,t]$, 
\beq
\label{H12}
\left\|\nabla\rho_{h}(x,s)-\nabla\hat \rho_{h}(x,s)\right\|\le 2dc_d\|\rho_0\|_\infty \max_{j,k=1,\cdots,d}\left\|\frac{\partial^2\varphi}{\partial x_j\partial x_k}\right\|_\infty U_D C' h^{1/6d}
\eeq
and 
\beq
\label{H13}
\int_{s=0}^t\left\|\nabla\rho_{h}(x,s)-\nabla\hat \rho_{h}(x,s)\right\| ds\le 2dtc_d\|\rho_0\|_\infty \max_{j,k=1,\cdots,d}\left\|\frac{\partial^2\varphi}{\partial x_j\partial x_k}\right\|_\infty U_D C' h^{1/6d}. 
\eeq
Let 
\beq
\label{C0}
c_0(t)=2dtc_d\|\rho_0\|_\infty \max_{j,k=1,\cdots,d}\left\|\frac{\partial^2\varphi}{\partial x_j\partial x_k}\right\|_\infty U_D C'+ 2c_d\|\rho_0\|_\infty \max_{k=1,\cdots,d}\left\|\frac{\partial\varphi}{\partial x_k}\right\|_\infty U_D C'+1.
\eeq
It is easy to see that 
$$
h^{\Lambda C' |\ln h|}<h^{1/12d}<h^{1/12d}c_0(t)
$$
when $h$ is sufficiently small. Thus the proof is complete. 
\end{proof}

\section{Decomposition of Errors}
Since we have estimated the distance between the empirical measures constructed from the interacting particle system and the self-consistent system. The remainder of the paper will mostly devote to the proof of Theorem 2. First, as described in the outline of the proof, we use Ito's formula to separate this distance into a term of the free energy estimation of PDE,  a term of initial error, a truncation error and a martingale error. To be precise, we have a proposition as follows:

\begin{prop} 
\label{Decompose}
For the difference between the PDE density $\rho$ and the empirical measure $\hat \rho_{h}$ constructed from the self-consistent system, we have for any $s\in [0,t]$
\beq
\label{Prop1}
\begin{aligned}
\| (\rho -\hat\rho_{h})(\cdot,s)\|^2 
 & =\| (\rho-\hat\rho_{h})(\cdot,0) \|^2
-\int_0^s\|\nabla (\rho-\hat\rho_{h})(\cdot,q) \|^2\, dq\\
&\quad
-  \int_0^s \int_{{\mathbb R}^d} 
 \nabla\cdot  F(x,q)   (\rho - \hat\rho_{h})^2(x,q) \, dx dq\\
&\quad
+Tr (s) +\bar M_s
\end{aligned}
\eeq
where $\| (\rho-\hat\rho_{h})(\cdot,0) \|^2$ is the initial error and the term of 
$$
-\int_0^s\|\nabla (\rho-\hat\rho_{h})(\cdot,q) \|^2\, dq-  \int_0^s \int_{{\mathbb R}^d} \nabla\cdot F(x,q)  \left( (\rho - \hat\rho_{h})(x,q)\right)^2 \, dx dq
$$
gives the free energy estimation. $\bar M_s=M_s+\tilde M_s$ is the martingale error from the Ito's formula, where $M_s$ is defined by $M_s=\sum_{i=1}^{N_h} \rho_0(\theta_{i,h}h) M_s^i$ with 
\beq
\label{Prop11}
M_s^i= 2h^d \int_0^s \int_{{\mathbb R}^d}\rho(x,q) \nabla \varphi_{\ep_h}\left(x-\hat X_{h,i}(q)\right)dx\cdot d B_i(q),
\eeq
and $\tilde M_s=\sum_{n=1}^{N_h}\rho_0(\theta_{i,h}h) \tilde M_s^i$ with $\tilde M_s^i$ equals to 
\beq
\label{Prop13}
\begin{aligned}
&2h^{2d} \int_0^s \int_{{\mathbb R}^d}\varphi_{\ep_h}(x)
\left( \sum_{j=1}^{i-1}\rho_0(\theta_{j,h}h) \nabla\varphi_{\ep_h}\left(x+\hat X_{h,i}(q)-\hat X_{h,j}(q)\right)\right.\\
& \hspace{2 in} \left.- \sum_{j=i+1}^{N_h}\rho_0(\theta_{j,h}h) \nabla\varphi_{\ep_h}\left(x+\hat X_{h,j}(q)-\hat X_{h,i}(q)\right)\right)\, dx\cdot dB_i(q).
\end{aligned}
\eeq
And $Tr(s)$ is the term of truncation error which is defined as 
\begin{align*}
\label{Prop14}
Tr(s)
=&2 \int_0^s \int_{{\mathbb R}^d} 
\left[\sum_{i=1}^{N_h} h^d\rho_0(\theta_{i,h}h) \varphi_{\ep_h}\left( x-\hat X_{h,i}(q)\right)
\left( F(x,q) - F\left(\hat X_{h,i}(q),q\right)  \right) \right]
\cdot \nabla  (\rho -\hat \rho_{h})(x,q) \, dx dq\\
&+h^{2d}s \|\nabla \varphi_{\ep_h}\|^2 \sum_{i=1}^{N_h} \rho_0(\theta_{i,h}h)^2. 
\end{align*}
\end{prop}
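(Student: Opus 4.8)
The plan is to obtain the identity \eqref{Prop1} by applying It\^o's formula twice: once to turn each particle bump $\varphi_{\ep_h}(x-\hat X_{h,i}(t))$ into a semimartingale in $t$ for every fixed $x$, and once to the $L^2$-functional $e\mapsto\|e\|^2$ evaluated along the resulting (Hilbert-space-valued) semimartingale $e(\cdot,t)=(\rho-\hat\rho_{h})(\cdot,t)$. First I would apply It\^o to $\varphi_{\ep_h}(x-\hat X_{h,i}(t))$. Since $d\hat X_{h,i}=F(\hat X_{h,i},t)\,dt+dB_i$ and the $B_i$ are standard (generator $\tfrac12\Delta$), this gives
$$d\varphi_{\ep_h}(x-\hat X_{h,i})=\Big[\tfrac12\Delta\varphi_{\ep_h}(x-\hat X_{h,i})-\nabla\varphi_{\ep_h}(x-\hat X_{h,i})\cdot F(\hat X_{h,i},t)\Big]dt-\nabla\varphi_{\ep_h}(x-\hat X_{h,i})\cdot dB_i,$$
where $\nabla,\Delta$ act on $x$ and the signs track the $-\hat X_{h,i}$ in the argument (the It\^o correction is $+\tfrac12\Delta$). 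Summing with weights $h^d\rho_0(\theta_{i,h})$ and using that $F(\hat X_{h,i},t)$ is $x$-independent to write the drift as a divergence yields $d\hat\rho_{h}=\big[\tfrac12\Delta\hat\rho_{h}-\nabla_x\cdot G_h\big]dt+dN$, with $G_h(x,t)=\sum_i h^d\rho_0(\theta_{i,h})\varphi_{\ep_h}(x-\hat X_{h,i})F(\hat X_{h,i},t)$ and noise $dN=-\sum_i h^d\rho_0(\theta_{i,h})\nabla\varphi_{\ep_h}(x-\hat X_{h,i})\cdot dB_i$.

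Next, writing $e=\rho-\hat\rho_{h}$ and substituting the PDE $\partial_t\rho=\tfrac12\Delta\rho-\nabla\cdot(\rho F)$ from \eqref{PDE}, the drift of $de$ is $\tfrac12\Delta e-\nabla\cdot(\rho F)+\nabla\cdot G_h$ and its martingale part is $-dN$. I would then apply the It\^o formula for $\|e(\cdot,t)\|^2=\int e^2\,dx$, using a stochastic Fubini theorem to interchange $\int dx$ with the It\^o integral, obtaining $d\|e\|^2=2\int e\,(\mathrm{drift\ of\ }de)\,dx\,dt+\big(\sum_i\int|\sigma_i|^2\,dx\big)dt+2\int e\,d(\mathrm{mart.\ of\ }e)\,dx$, where $\sigma_i=h^d\rho_0(\theta_{i,h})\nabla\varphi_{\ep_h}(x-\hat X_{h,i})$. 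The quadratic-variation (trace) term is $\sum_i\int|\sigma_i|^2\,dx=h^{2d}\|\nabla\varphi_{\ep_h}\|^2\sum_i\rho_0(\theta_{i,h})^2$, which integrated in $q$ gives the second summand of $Tr(s)$.

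For the drift term I integrate by parts in $x$ (no boundary terms, since $\rho\in H^k$ decays and $\hat\rho_{h}$ is compactly supported): $2\int e\cdot\tfrac12\Delta e\,dx=-\|\nabla e\|^2$, producing the first free-energy term. Splitting $-\nabla\cdot(\rho F)+\nabla\cdot G_h=-\nabla\cdot(eF)+\nabla\cdot(G_h-\hat\rho_{h}F)$ and integrating by parts, the first piece gives $-\int(\nabla\cdot F)\,e^2\,dx$ (after $2\int eF\cdot\nabla e=\int F\cdot\nabla(e^2)$), i.e.\ the second free-energy term; and since $G_h-\hat\rho_{h}F=\sum_i h^d\rho_0(\theta_{i,h})\varphi_{\ep_h}(x-\hat X_{h,i})\big(F(\hat X_{h,i},\cdot)-F(x,\cdot)\big)$, the second piece $-2\int(G_h-\hat\rho_{h}F)\cdot\nabla e\,dx$ is exactly the first summand of $Tr(s)$. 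Finally, the martingale term is $\bar M_s=2\int_0^s\!\int e\sum_i\sigma_i\cdot dB_i\,dx$; splitting $e=\rho-\hat\rho_{h}$, the $\rho$-part is $M_s=\sum_i\rho_0(\theta_{i,h})M_s^i$ with $M_s^i$ as in \eqref{Prop11}, and the $-\hat\rho_{h}$-part is $\tilde M_s$. Substituting $\hat\rho_{h}=\sum_j h^d\rho_0(\theta_{j,h})\varphi_{\ep_h}(x-\hat X_{h,j})$, the coefficient of $dB_i$ involves the kernels $K_{ij}=\int\varphi_{\ep_h}(x-\hat X_{h,j})\nabla\varphi_{\ep_h}(x-\hat X_{h,i})\,dx$; an integration by parts in $x$ shows $K_{ij}=-K_{ji}$, so the diagonal $j=i$ drops out, and the shift $x\mapsto x+\hat X_{h,i}$ followed by one more integration by parts rewrites the $j<i$ and $j>i$ contributions in the precise antisymmetric form \eqref{Prop13}.

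I expect the main obstacle to be the rigorous justification of the Hilbert-space It\^o formula for $\|e(\cdot,t)\|^2$ together with the stochastic Fubini interchange: one must confirm that $e(\cdot,t)$ is genuinely an $L^2$-valued semimartingale with the claimed drift and martingale parts, and that the spatial and It\^o integrals commute. The needed integrability is available because $\varphi\in C_0^\infty$, each $\hat X_{h,i}$ has finite moments, and $\rho\in L^\infty(H^k)$, so $\hat\rho_h,\nabla\hat\rho_h$ are smooth and compactly supported in $x$. The remaining work---the integration-by-parts bookkeeping and, in particular, extracting the antisymmetric form of $\tilde M_s$ from $K_{ij}=-K_{ji}$---is elementary but must be carried out carefully to reproduce \eqref{Prop13} sign-for-sign.
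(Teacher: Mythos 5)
Your proposal is correct: I checked the drift splitting $-\nabla\cdot(\rho F)+\nabla\cdot G_h=-\nabla\cdot(eF)+\nabla\cdot\bigl(G_h-\hat\rho_{h}F\bigr)$, the identification of the trace term, and the antisymmetric rewriting of the noise coefficient via $K_{ij}=-K_{ji}$, and every term reproduces the paper's decomposition sign-for-sign. However, your route is organized genuinely differently from the paper's. The paper never writes an evolution equation for $\hat\rho_{h}$ and never invokes an $L^2$-valued It\^o formula; instead it expands $\|(\rho-\hat\rho_{h})(\cdot,s)\|^2=\|\rho(\cdot,s)\|^2-2\int\rho\,\hat\rho_{h}\,dx+\|\hat\rho_{h}(\cdot,s)\|^2$ and treats each piece with scalar It\^o calculus: the PDE alone handles $\|\rho\|^2$; It\^o applied to the real-valued process $\rho(x,s)\,\varphi_{\ep_h}\bigl(x-\hat X_{h,i}(s)\bigr)$ (then integrated in $x$ and summed in $i$) handles the cross term and produces $M_s$; and, for $\|\hat\rho_{h}\|^2$, It\^o applied to $\varphi_{\ep_h}\bigl(x+\hat X_{h,j}(s)-\hat X_{h,i}(s)\bigr)$ for each pair $i<j$ produces $\tilde M_s$, while the constant term $h^{2d}s\,\|\nabla\varphi_{\ep_h}\|^2\sum_i\rho_0(\theta_{i,h}h)^2$ arises there not as an It\^o trace but as the diagonal that is added and subtracted when completing the off-diagonal sum to $-\int_0^s\|\nabla\hat\rho_{h}\|^2\,dq$. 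Your Hilbert-space formulation is more conceptual --- the free-energy, truncation and martingale terms all fall out of a single application of the norm It\^o formula, and the constant term is transparently the quadratic-variation trace --- but it carries exactly the rigor burden you flag at the end: justifying the infinite-dimensional It\^o formula and the stochastic Fubini interchange. The paper's termwise expansion is precisely the standard way to discharge that burden: since $\hat\rho_{h}$ is a finite sum of smooth, compactly supported bumps, the expansion is finite and everything reduces to the ordinary It\^o formula for real-valued semimartingales, with no infinite-dimensional machinery. In that sense your argument is the abstract version of the paper's, and the paper's proof is how one would make your final paragraph rigorous.
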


\begin{proof}To prove the proposition, first note that for any $h$,
$$
\| (\rho-\rho_{h})(\cdot,s) \|^2
=\| \rho(\cdot,s)\|^2 
- 2 \int_{{\mathbb R}^d} \rho(x,s) \hat\rho_{h}(x,s)\, dx+\| \hat\rho_{h}(\cdot,s)\|^2.
$$
First for the deterministic part of $\| \rho(\cdot,t)\|^2$, we have 
\beq
\label{DT}
\begin{aligned}
\int_{{\mathbb R}^d}  \rho(x,s)^2 \, dx=&\int_{{\mathbb R}^d}  \rho(x,0)^2 \, dx
+\int_0^s \int_{{\mathbb R}^d} \rho(x,q)\left( \Delta \rho(x,q)-2\nabla \cdot (\rho  F) (x,q) \right)\, dxdq \\
=&\int_{{\mathbb R}^d}  \rho(x,0)^2 \, dx-\int_0^s\|\nabla \rho\|^2dq+2\int_0^s \int_{\mathbb{R}^d} \rho(x,q)F(x,q)\cdot \nabla \rho(x,q) dx dq
\end{aligned}
\eeq
Then for the second part which equals to 
$$
-2h^d\int_{{\mathbb R}^d} \rho(x,s)\sum_{i=1}^{N_h}\rho_0(\theta_{i,h}h)\varphi_{\ep_h}\left(x-\hat X_{h,i}(s)\right)dx,
$$
note that for each $i$, by Ito's formula, we have 
\beq
\label{ITOD}
\begin{aligned}
\rho(x,s)\varphi_{\ep_h}\left(x-\hat X_{h,i}(s)\right)&=\rho(x,0)\varphi_{\ep_h}\left(x-\hat X_{h,i}(0)\right)+\int_0^s \frac{\partial \rho(x,q)}{\partial t} \varphi_{\ep_h}\left(x-\hat X_{h,i}(q)\right)\,dq\\
&-\int_0^s \rho(x,q) \nabla \varphi_{\ep_h}\left(x-\hat X_{h,i}(q)\right)\cdot  F\left(\hat X_{h,i}(q),q \right)\,dq\\
&-\int_0^s \rho(x,q) \nabla \varphi_{\ep_h}\left(x-\hat X_{h,i}(q)\right)\cdot d B_i(q)\\
&+\frac{1}{2}\int_0^s \rho(x,q) \Delta \varphi_{\ep_h}\left(x-\hat X_{h,i}(q)\right)\, dq.
\end{aligned}
\eeq
Note that for the second term in the right hand side of the sum above, according to the definition of our PDE, 
$$
\int_0^s \frac{\partial \rho(x,q)}{\partial t} \varphi_{\ep_h}\left(x-\hat X_{h,i}(q)\right)\, dq
=\int_0^s\left( \frac{1}{2}\Delta \rho(x,q)-\nabla \cdot \big(\rho(x,q) F(x,q)\big)\right)\varphi_{\ep_h}\left(x-\hat X_{h,i}(q)\right)\, dq.
$$
Then integrate it over $x\in \mathbb{R}^d$, we have 
\beq
\label{Mix1}
\begin{aligned}
\int_0^s&\int_{{\mathbb R}^d}  \frac{\partial \rho(x,q)}{\partial t} \varphi_{\ep_h}\left(x-\hat X_{h,i}(q)\right)dx dq\\
=&-\frac{1}{2}\int_0^s \int_{{\mathbb R}^d} \nabla \rho(x,q)\cdot \nabla \varphi_{\ep_h}\left(x-\hat X_{h,i}(q)\right)dx dq\\
&+ \int_0^s \int_{{\mathbb R}^d} \rho(x,q) F(x,q) \cdot \nabla \varphi_{\ep_h}\left( x-\hat X_{h,i}(q)\right) \, dx dq.
\end{aligned}
\eeq
Then integrating the third term in (\ref{ITOD}) over $x\in \mathbb{R}^d$ we have by divergence theorem that
\beq
\label{Mix2}
\begin{aligned}
-\int_0^s&\int_{{\mathbb R}^d} \rho(x,q) \nabla \varphi_{\ep_h}\left(x-\hat X_{h,i}(q)\right)\cdot F\left(\hat X_{h,i}(q),q \right)dxdq\\
&=\int_0^s \int_{{\mathbb R}^d}  \varphi_{\ep_h}\left(x-\hat X_{h,i}(q)\right) F(\hat X_{h,i}(q),q) \cdot \nabla \rho(x,q)\, dx dq.
\end{aligned}
\eeq
Combining (\ref{ITOD}), (\ref{Mix1}) and (\ref{Mix2}) we have
\beq
\begin{aligned}
\int_{{\mathbb R}^d} &\rho(x,s)\varphi_{\ep_h}\left(x-\hat X_{h,i}(s)\right)dx\\
&=\int_{{\mathbb R}^d}  \rho(x,0) \varphi_{\ep_h}\left(x-\hat X_{h,i}(0)\right)dx\\
&-\int_0^s \int_{{\mathbb R}^d} \nabla \rho(x,q)\cdot \nabla \varphi_{\ep_h}\left(x-\hat X_{h,i}(q)\right)dx dq\\
&+\int_0^s \int_{{\mathbb R}^d}  \rho(x,q)  F(x,q)  \cdot \nabla \varphi_{\ep_h}\left(x-\hat X_{h,i}(q)\right)dx dq \\
&+\int_0^s \int_{{\mathbb R}^d}   \varphi_{\ep_h}\left(x-\hat X_{h,i}(q)\right)  F(\hat X_{h,i}(q),q)  \cdot \nabla \rho(x,q) dx dq
- \frac{h^{-d}}{2}M^i_s
\end{aligned}
\eeq
where $M^i_s$ is a martingale given in \eqref{Prop11}, i.e.,  
$$
M^i_s=2h^{d}\int_0^s \int_{{\mathbb R}^d}\rho(x,q) \nabla \varphi_{\ep_h}\left(x-\hat X_i(q)\right)dx\cdot d B_i(q). 
$$
Summing up and taking the weighted average over $i=1,2,\cdots,N_h$, we have
\beq
\label{Mix}
\begin{aligned}
-2\int_{{\mathbb R}^d}& \hat\rho_{h}(x,s) \rho(x,s)\, dx \\
&=-2 \int_{{\mathbb R}^d} \hat\rho_{h}(x,0) \rho(x,0)\,dx
  +2\int_0^s \int_{{\mathbb R}^d} \nabla \rho(x,q) \cdot \nabla \hat\rho_{h}(x,q)\,dx\, dq\\
&-2\int_0^s \int_{{\mathbb R}^d}   \rho(x,q) F(x,q) \cdot  \nabla\hat\rho_{h}(x,q)\, dx\, dq\\
&-2\int_0^s \int_{{\mathbb R}^d}  h^d\sum_{i=1}^{N_h}\rho_0(\theta_{i,h}h)
\varphi_{\ep_h}\left(x-\hat X_{h,i}(q)\right)  F\left(\hat X_{h,i}(q),q\right)  \cdot  \nabla  \rho(x,q)\, dx\, dq\\
&+M_s
\end{aligned}
\eeq
Lastly, we look at the part of $\left\|\hat\rho_{h}(\cdot,s)\right\|^2$ which equals to 
\beq
\begin{aligned}
h^{2d}\sum_{i,j=1,2,\cdots,N_h}\rho_0(\theta_{i,h}h)\rho_0(\theta_{j,h}h)\int_{{\mathbb R}^d} \varphi_{\ep_h}\left( x-\hat X_{h,i}(s)\right) \varphi_{\ep_h}\left( x-\hat X_{h,j}(s)\right)\, dx. 
\end{aligned}
\eeq
For each $i,j\in \{1,2,\cdots,N_h\}$, if $i=j$, we have directly from change of variables that 
$$
\int_{{\mathbb R}^d} \varphi_{\ep_h}\left( x-\hat X_{h,i}(s)\right) \varphi_{\ep_h}\left( x-\hat X_{h,i}(s)\right)\, dx= \|\varphi_{\ep_h}\|^2. 
$$
And if $i\not=j$, say without loss of generality $i<j$ again by change of variables we have 
$$
\int_{{\mathbb R}^d} \varphi_{\ep_h}\left( x-\hat X_{h,i}(s)\right) \varphi_{\ep_h}\left( x-\hat X_{h,j}(s)\right)\, dx
=\int_{{\mathbb R}^d} \varphi_{\ep_h}\left( x\right) \varphi_{\ep_h}\left( x+\hat X_{h,j}(s)-\hat X_{h,i}(s)\right)\, dx.
$$
Then we can again apply the Ito's formula on $\varphi_{\ep_h}\left( x+\hat X_{h,j}(s)-\hat X_{h,i}(s)\right)$ and have it equals to:
\beq
\label{E1}
\begin{aligned}
\varphi_{\ep_h}&\left( x+\hat X_{h,j}(0)-\hat X_{h,i}(0)\right)+\int_0^s \Delta \varphi_{\ep_h}\left( x+\hat X_{h,j}(q)-\hat X_{h,i}(q)\right)\, dq\\
&+\int_0^s \nabla\varphi_{\ep_h}\left( x+\hat X_{h,j}(q)-\hat X_{h,i}(q)\right)\cdot \left(F\left(\hat X_{h,j}(q),q\right)- F\left(\hat X_{h,i}(q),q\right)\right)\, dq\\
&+\int_0^s \nabla \varphi_{\ep_h}\left( x+\hat X_{h,j}(q)-\hat X_{h,i}(q)\right) \cdot \big(dB_j(q)-dB_i(q) \big). 
\end{aligned}
\eeq
Integrating the first and second terms over $x$, we have
$$
\int_{{\mathbb R}^d} \varphi_{\ep_h}(x)\varphi_{\ep_h}\left( x+\hat X_{h,j}(0)-\hat X_{h,i}(0)\right)dx=\int_{{\mathbb R}^d} \varphi_{\ep_h}\left(x-\hat X_{h,i}(0)\right)\varphi_{\ep_h}\left( x-\hat X_{h,j}(0)\right)dx
$$
and
\begin{align*}
\int_0^s\int_{{\mathbb R}^d}&\varphi_{\ep_h}\left(x\right) \Delta \varphi_{\ep_h}\left( x+\hat X_{h,j}(q)-\hat X_{h,i}(q)\right)dxdq\\
&=-\int_0^s \int_{{\mathbb R}^d}\nabla \varphi_{\ep_h}\left(x-\hat X_{h,i}(q)\right) \cdot\nabla \varphi_{\ep_h}\left( x-\hat X_{h,j}(q)\right)dxdq.
\end{align*}
Moreover, for the third term we have that
\begin{align*}
\int_0^s\int_{{\mathbb R}^d}&\varphi_{\ep_h}(x) \nabla\varphi_{\ep_h}\left( x+\hat X_{h,j}(q)-\hat X_{h,i}(q)\right)\cdot F\left(\hat X_{h,j}(q),q\right)dxdq\\
&=\int_0^s\int_{{\mathbb R}^d}
\varphi_{\ep_h}\left( x-\hat X_{h,j}(q)\right)F\left(\hat X_{h,j}(q),q\right)
\cdot \nabla\varphi_{\ep_h}\left( x-\hat X_{h,i}(q)\right)\, dxdq
\end{align*}
and that 
\begin{align*}
\int_0^s\int_{{\mathbb R}^d}&\varphi_{\ep_h}(x) \nabla\varphi_{\ep_h}\left( x+\hat X_{h,j}(q)-\hat X_{h,i}(q)\right)\cdot F\left(X_{h,i}(q),q\right)dxdq\\
&=\int_0^s\int_{{\mathbb R}^d}
\varphi_{\ep_h}\left( x-\hat X_{h,j}(q)\right)F\left(\hat X_{h,i}(q),q\right)
\cdot \nabla\varphi_{\ep_h}\left( x-\hat X_{h,i}(q)\right)\, dxdq\\
&=-\int_0^s\int_{{\mathbb R}^d}
\varphi_{\ep_h}\left( x-\hat X_{h,i}(q)\right)F\left(\hat X_{h,i}(q),q\right)\cdot 
\nabla\varphi_{\ep_h}\left( x-\hat X_{h,j}(q)\right)\, dxdq
\end{align*}
by divergence theorem. We also note that
\beq
\label{E2}
\int_{{\mathbb R}^d}\varphi_{\ep_h}\left(x-\hat X_{h,i}(s)\right) \nabla\varphi_{\ep}\left( x-\hat X_{h,i}(s)\right)\cdot \vec F\left(\hat X_{h,i}(s),s\right)dx\equiv 0. 
\eeq
So after we sum up over all the $i,j$ and have the weighted average, for combinations of the initial values for $i\not=j$ and the constant values for $i=j$ we have 
\beq
\label{E3}
\begin{aligned}
2\sum_{i<j\in \{1,2,\cdots,N_h\}}&h^{2d}\rho_0(\theta_{i,h}h)\rho_0(\theta_{j,h}h)\int_{{\mathbb R}^d} \varphi_{\ep_h}\left(x-\hat X_{h,i}(0)\right)\varphi_{\ep_h}\left( x-\hat X_{h,j}(0)\right)dx\\
&+\sum_{i=1}^{N_h}h^{2d}\rho_0(\theta_{i,h}h)^2\|\varphi_{\ep_h}\|^2=\int_{\mathbb{R}^d} \hat\rho_{h}(x,0)^2 dx=\left\|\hat\rho_{h}(\cdot,0)\right\|^2
\end{aligned}
\eeq
Then summing up the integration over $x$ of the second term in \eqref{E1}, we have
\beq
\label{E4}
\begin{aligned}
-2\sum_{i<j\in \{1,2,\cdots,N_h\}}&h^{2d}\rho_0(\theta_{i,h}h)\rho_0(\theta_{j,h}h)\int_0^s \int_{{\mathbb R}^d}\nabla \varphi_{\ep_h}\left(x-\hat X_{h,i}(q)\right) \cdot\nabla \varphi_{\ep_h}\left( x-\hat X_{h,j}(q)\right)dxdq\\
&=-\int_0^s \left\| \nabla \hat \rho_h(\cdot,q)\right\|^2 dq+h^{2d}s \|\nabla \varphi_{\ep_h}\|^2 \sum_{i=1}^{N_h} \rho_0(\theta_{i,h}h)^2. 
\end{aligned}
\eeq
Then summing up the integration over $x$ of the third term in \eqref{E1} and note that we can add the zero terms in \eqref{E2} for each $i$ in the weighted average, we have 
\beq
\label{E5}
\begin{aligned}
2\sum_{i<j\in \{1,2,\cdots,N_h\}}h^{2d}\rho_0(\theta_{i,h}h)\rho_0(\theta_{j,h}h)\int_0^s \int_{\mathbb{R}^d}&\varphi_{\ep_h}(x)\nabla\varphi_{\ep_h}\left( x+\hat X_{h,j}(q)-\hat X_{h,i}(q)\right)\\
&\cdot \left(F\left(\hat X_{h,j}(q),q\right)- F\left(\hat X_{h,i}(q),q\right)\right)\, dx dq\\
=2 \int_0^s \int_{{\mathbb R}^d} \sum_{i=1}^{N_h} h^d\rho_0(\theta_{i,h}h) \varphi_{\ep_h}&\left( x-\hat X_{h,i}(q)\right)F\left(\hat X_{h,i}(q),q\right)\cdot \nabla \hat \rho_{h}(x,q) \, dx dq
\end{aligned}
\eeq
And lastly, if we sum up the last term in \eqref{E1} for all the $i,j$'s, we just get $\tilde M_s=\sum_{n=1}^{N_h}\tilde\rho_0(\theta_{i,h}h) \tilde M_s^i$ with $\tilde M_s^i$ defined in \eqref{Prop13}. Thus combining \eqref{E3}-\eqref{E5}, 
\beq
\label{E6}
\begin{aligned}
\left\|\hat\rho_{h}(\cdot,s)\right\|^2=&\left\|\hat\rho_{h}(\cdot,0)\right\|^2-\int_0^s \left\| \nabla \hat \rho_h(\cdot,q)\right\|^2 dq\\
&+2 \int_0^s\int_{{\mathbb R}^d} \sum_{i=1}^{N_h} h^d\rho_0(\theta_{i,h}h) \varphi_{\ep_h}\left( x-\hat X_{h,i}(q)\right)F\left(\hat X_{h,i}(q),q\right)\cdot \nabla \hat \rho_{h}(x,q) \, dx dq\\
&+\tilde M_s+h^{2d}s \|\nabla \varphi_{\ep_h}\|^2 \sum_{i=1}^{N_h} \rho_0(\theta_{i,h}h)^2. 
\end{aligned}
\eeq
At this point, we can combine \eqref{DT}, \eqref{Mix} and \eqref{E6} and have 
\beq
\label{Prop15}
\begin{aligned}
\| (\rho -\hat\rho_{h})(\cdot,s)\|^2 
 & =\| (\rho-\hat\rho_{h})(\cdot,0) \|^2
-\int_0^s\|\nabla (\rho-\hat\rho_{h})(\cdot,q) \|^2\, dq+\bar M_s\\
&+h^{2d}s \|\nabla \varphi_{\ep_h}\|^2 \sum_{i=1}^{N_h} \rho_0(\theta_{i,h}h)^2\\
&+2 \int_0^s\int_{{\mathbb R}^d} \sum_{i=1}^{N_h} h^d\rho_0(\theta_{i,h}h) \varphi_{\ep_h}\left( x-\hat X_{h,i}(q)\right)F\left(\hat X_{h,i}(q),q\right)\cdot \nabla \hat \rho_{h}(x,q) \, dx dq\\
&+2\int_0^s \int_{\mathbb{R}^d} \rho(x,q)F(x,q)\cdot \nabla \rho(x,q) dx dq\\
&-2\int_0^s \int_{{\mathbb R}^d}   \rho(x,q) F(x,q) \cdot  \nabla\hat\rho_{h}(x,q)\, dx\, dq\\
&-2\int_0^s \int_{{\mathbb R}^d}  \sum_{i=1}^{N_h}h^d\rho_0(\theta_{i,h}h)
\varphi_{\ep_h}\left(x-\hat X_{h,i}(q)\right)  F\left(\hat X_{h,i}(q),q\right)  \cdot  \nabla  \rho(x,q)\, dx\, dq.
\end{aligned}
\eeq
Then plus and minus the term 
\beq
\label{add}
2\int_0^s\int_{\mathbb{R}^d}\sum_{i=1}^{N_h} h^{d}\rho_0(\theta_{i,h}h) \varphi_{\ep_h}\left( x-\hat X_{h,i}(q)\right) F\left(x,q\right)\cdot \nabla (\rho - \hat \rho_{h})(x,q)\, dx dq
\eeq
we have 
\beq
\label{Prop16}
\begin{aligned}
\| (\rho -\hat\rho_{h})(\cdot,s)\|^2 
  =&\| (\rho-\hat\rho_{h})(\cdot,0) \|^2
-\int_0^s\|\nabla (\rho-\hat\rho_{h})(\cdot,q) \|^2\, dq+Tr (s) +\bar M_s\\
&+2\int_0^s\int_{\mathbb{R}^d} (\rho - \hat \rho_{h})(x,q) F\left(x,q\right)\cdot \nabla (\rho - \hat \rho_{h})(x,q)\, dx dq. 
\end{aligned}
\eeq
By Green's theorem, 
\begin{align*}
2\int_0^s\int_{\mathbb{R}^d}& (\rho - \hat \rho_{h})(x,q) F\left(x,q\right)\cdot \nabla (\rho - \hat \rho_{h})(x,q)\, dx dq\\
&=-\int_0^s\int_{\mathbb{R}^d} \nabla\cdot F\left(x,q\right)(\rho - \hat \rho_{h})^2(x,q)\, dxdq.
\end{align*}
We have verified \eqref{Prop1} and the proof of Proposition 1 is complete. 
\end{proof}

\section{Estimation on the Separation}
With Proposition \ref{Decompose} decomposing the distance between $\rho$ and $\hat \rho_{h}$ as the sum of several different error terms with different physical and mathematical meanings, we will estimate those error terms one by one. But first, we prove an estimation of separations which shows that, with our initial data, the independent solutions of the self-consistent SDE with high probability cannot be too close to each other. To be specific, we use
\beq
E_j(t)=\frac{1}{N_h}\sum_{i\le N_h: i\not=j} \int_0^t P( |\hat X_{h,i}(s)-\hat X_{h,j}(s)| \le 2\ep_h)\, ds
\label{R*2}
\eeq
to measure the separation of the self-consistent system. Intuitively, we can see $E_j(t)$ as the sum of the average length of time for each particle that is within a distance of $2\ep_h$ from particle $j$. And we have the following proposition showing that $E_j(t)$ is small, which implies that, with high probability, the path of different particles in the self-consistent system cannot be too close to each other. The reason we want to first prove the proposition can be seen later in \eqref{R*1}. 
\begin{prop}
\label{Lemma 4.1}
There exist some constants $C_1(t)$ and $C_2(t)$ depends only on $t$, $d$ and $F_0$ such that 
\beq
\label{lemma 4.1}
E_j(t)\le C_1(t)\ep_h^{d-1}+C_2(t)\frac{1}{N_h\ep_h}
\eeq 
for all $j=1,2,\cdots, N_h$, when $h$ is sufficiently small. 
\end{prop}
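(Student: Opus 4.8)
The plan is to exploit the two structural facts already recorded in the excerpt: the self-consistent processes in \eqref{self-consistent} are mutually independent, and each solves an SDE whose drift $F(\cdot,s)$ is bounded and Lipschitz in $x$. Since the only obstruction to the $\hat X_{h,i}$ being Brownian motions is the deterministic drift, the natural reduction is via Girsanov's theorem, turning the separation probability into an explicit Gaussian computation. Fixing $i\neq j$, I would change measure for the pair $(\hat X_{h,i},\hat X_{h,j})$ \emph{only}, leaving the remaining particles untouched so that no $N_h$-dependence ever enters the Radon--Nikodym derivative. Under the new measure $Q$ the processes $\hat X_{h,i}(s)-\theta_{i,h}h$ and $\hat X_{h,j}(s)-\theta_{j,h}h$ become independent standard Brownian motions, so $\hat X_{h,i}(s)-\hat X_{h,j}(s)$ is Gaussian with mean $a_{ij}:=h(\theta_{i,h}-\theta_{j,h})$ (nonzero, with $|a_{ij}|\ge h$) and covariance $2sI_d$. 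Because $F$ is bounded, Novikov's condition is automatic and every $L^p(Q)$-norm of $dP/dQ$ is bounded by a constant $C(t,p)$ depending only on $t$, $p$ and $\|F\|_\infty$; Hölder's inequality then yields, for any $p>1$,
$$P\big(|\hat X_{h,i}(s)-\hat X_{h,j}(s)|\le 2\ep_h\big)\le C(t,p)\,Q_{ij}(s)^{1-1/p},\qquad Q_{ij}(s):=\int_{|w|\le 2\ep_h}g_s(w-a_{ij})\,dw,$$
where $g_s$ is the density of $N(0,2sI_d)$. This is the promised reduction to a Brownian separation problem, the fractional power $1-1/p$ being the price of the change of measure.

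Next I would estimate $Q_{ij}(s)\le\min\!\big(1,\,C\ep_h^d s^{-d/2}\big)$, sharpened by the tail factor $e^{-|a_{ij}|^2/(cs)}$ when $|a_{ij}|\ge 4\ep_h$, and carry out the time integral $\int_0^t Q_{ij}(s)^{1-1/p}\,ds$. After exchanging the sum over $i$ with the integrals, the governing object is the truncated heat potential $\int_0^t g_s(z)\,ds$, which is comparable to $|z|^{-(d-2)}$ for $d\ge 3$, to $\log(1/|z|)$ for $d=2$, and is bounded by $c\sqrt t$ for $d=1$. The centers $a_{ij}$ run over the rescaled lattice $h\ZZ^d\setminus\{0\}$ with spacing $h\ll\ep_h=h^{1/6d}$, and by \eqref{Distance1} the count $h^d N_h$ stays comparable to $|D|$; hence each lattice sum is comparable to $h^{-d}$ times a continuum integral. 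I would then split $\sum_{i\neq j}$ into the \emph{near} block $|a_{ij}|\lesssim\ep_h$ (containing $\sim(\ep_h/h)^d$ terms) and the \emph{far} block, estimating each against the corresponding continuum integral and using the Gaussian tail to make the far integral converge.

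Assembling the two blocks and dividing by $N_h$, together with $h^{-d}/N_h\asymp 1$, produces the two advertised terms: the far/bulk block, evaluated with $p$ chosen so that $d(1-1/p)\ge d-1$ (i.e.\ $p\ge d$), contributes $C_1(t)\,\ep_h^{d-1}$, while the near-field and lattice-discretization residual contributes the lower-order $C_2(t)\,(N_h\ep_h)^{-1}$; both constants then depend only on $t$, $d$ and $F_0$ through $\|F\|_\infty$. (The exponent $\ep_h^{d-1}$ is not sharp, but this modest choice of $p$ keeps the constant clean and is all that is needed downstream in \eqref{R*1}.)

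The hard part will be the far block in low dimensions. The fractional power forced by the Girsanov step pushes the exponent of the heat potential to the edge of integrability: at the critical choice $p=d$ the $s$-integral $\int_0^\infty s^{-\frac{d}{2}(1-1/p)}e^{-c|a_{ij}|^2/s}\,ds$ has integrand $s^{-(d-1)/2}$ at infinity, so it is finite only for $d\ge 4$, is logarithmically critical at $d=3$, and for $d=1,2$ \emph{diverges}, forcing the use of the finite horizon $\int_0^t$ in place of $\int_0^\infty$ and producing the $t$-dependent constants. This threshold behavior is exactly why the cases $d=1$, $d=2$ and $d\ge 3$ must be argued separately, and the delicate comparison of the lattice sums with their continuum integrals near criticality is where essentially all the technical effort lies; for $d=1$ the term $\ep_h^{d-1}$ degenerates to a constant, so the statement is weakest there while the same scheme still closes the bound.
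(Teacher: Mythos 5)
Your proposal is correct and shares the paper's skeleton: fix the pair $(i,j)$, remove the drift by a \emph{pairwise} Girsanov change of measure (Novikov is automatic since $|F|\le\|F_0\|_\infty$), reduce to a separation estimate for a Gaussian variable whose mean $\gamma_{i,j}$ ranges over the $h$-lattice, and finish by lattice counting with a case split in the dimension. Where you genuinely deviate is in how the change-of-measure cost is paid, and this reshapes the downstream analysis. The paper never invokes H\"older: it splits on the event $\{\mathcal{E}_s<\ep_h^{1/2}\}$, proves $P(\mathcal{E}_s<\ep_h^{1/2})\le \ep_h^{2d}\exp\big((4d^2+16d^3)s\|F_0\|_\infty^2\big)$ via an $L^{4d}$ moment bound for the exponential martingale plus Chebyshev, and on the complementary event pays only the explicit factor $\ep_h^{-1/2}$ in front of the \emph{full-power} Gaussian probability; its auxiliary Brownian-motion lemma then gives the clean bound $C^*_1(t)\ep_h^d+C^*_2(t)/N_h$ by decomposing the lattice into annuli $A_k$ of width $\delta_2\approx CN_h^{-1/d}+4\ep_h$ and recognizing Riemann sums (of $x|\log x|$ when $d=2$, of $x$ when $d\ge3$), the case split $d=1,2,\ge3$ being dictated by the heat potential $\int_0^t s^{-d/2}e^{-|z|^2/4s}\,ds$ being $O(\sqrt t)$, logarithmic, or $O(|z|^{2-d})$. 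Your H\"older route is leaner at the probabilistic step (one inequality and $L^p(Q)$ control of $dP/dQ$, versus the paper's two-event split, second exponential-martingale estimate, and Chebyshev), but it degrades the Gaussian probability to the power $1-1/p$; that is exactly what forces $p\ge d$, shifts the integrability thresholds to $d=3,4$, and obliges the finite horizon in low dimensions, so the delicate work you defer (fractional powers of heat kernels, lattice-to-continuum comparison near criticality) is work the paper's truncation device sidesteps by keeping the exponent intact. Both routes land inside the stated estimate with slack and with constants depending only on $t$, $d$, $\|F_0\|_\infty$: the paper actually obtains $C\ep_h^{2d}+\ep_h^{-1/2}\big(C_1^*(t)\ep_h^{d}+C_2^*(t)/N_h\big)$, i.e.\ order $\ep_h^{d-1/2}+(N_h\ep_h^{1/2})^{-1}$, while your near/far decomposition yields order $\ep_h^{d(1-1/p)}+\ep_h^{d}+1/N_h$, which for $p\ge d$ is also dominated by $C_1(t)\ep_h^{d-1}+C_2(t)(N_h\ep_h)^{-1}$.
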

\begin{proof}
For any $h$ and $j\le N_h$, Fix $i\not=j$, $i\le N_h$, and let $\{\Omega,\mathcal{F}^{i,j}_t,P\}$ be our probability measure space where $,\mathcal{F}^{i,j}_t$ is the natural filtration generated by $B^*_{i,j}(t)=[B_i(t), B_j(t)]$, which is a $2d$-dimensional Brownian motion. Let $\theta_{i,j}(s)=- \big( F(\hat X_{h,i}(s),s), F(\hat X_{h,j}(s),s) \big)$ be the integrand and consider the adapted measurable process 
\beq
\Gamma_s=\int_0^s \theta_{i,j}(h) \cdot d B^*_{i,j}(h).
\eeq 
Note that for any $s\ge 0$, 
\beq
\label{new1}
|\theta_{i,j}(s)|^2\le 2d \,\|F_0\|_\infty^2. 
\eeq
Thus the Novikov condition (see page 198 of \cite{KS} for details) is satisfied, i.e., 
$$
E\left[\exp\left(\frac{1}{2} \int_0^s |\theta_{i,j}(q)|^2 \,dq \right)\right]\le \exp(sd\, \|F_0\|_\infty^2 )<\infty,
$$
by Girsanov Theorem (see Theorem 3.5.1 of \cite{KS}) we can define a probability measure $Q$ in our probability space with Radon-Nikodym derivative
\beq
\left. \frac{d Q_{i,j}}{d P}\right |\mathcal{F}_s=\mathcal{E}_s
=\exp\left[ \Gamma_s-\frac{1}{2} \int_0^s|\theta_{i,j}(q)|^2 \,dq \right].
\eeq
Then we have 
$$
\left[
\begin{array}{c}
\hat X_{h,i}(s)-\hat X_{h,i}(0)\\
\hat X_{h,j}(s)-\hat X_{h,j}(0)
\end{array}
\right]=
\left[
\begin{array}{c}
B_i(s)+\int_0^s F(\hat X_{h,i}(q),q)dq\\
B_j(s)+\int_0^s F(\hat X_{h,j}(q),q)dq
\end{array}
\right]=
B_{i,j}^*(s)-\langle \Gamma, B_{i,j}^*\rangle_s
$$
is a standard $2d$-dimensional Brownian motion under probability measure $Q_{i,j}$, where $\langle \Gamma, B_{i,j}^*\rangle_s$ is again the quadratic covariance between $\Gamma_s$ and $B_{i,j}^*(s)$. Thus by Radon-Nikodym Theorem we have 
\beq
\label{Gir1}
\int_{ |\hat X_{h,i}(s)-\hat X_{h,j}(s)| \le 2\ep_h} \mathcal{E}_s dP= P\left( |B_i(s)-B_j(s)+\gamma_{i,j}|
\le 2\ep_h\right)
\eeq
where $\gamma_{i,j}=h[\theta(j,h)-\theta(i,h)]$. Moreover, 
$$
P\left( |\hat X_{h,i}(s)-\hat X_{h,j}(s)| \le 2\ep_h\right)
\le P(\mathcal{E}_s <\ep_h^{1/2})+P\left( |\hat X_{h,i}(s)-\hat X_{h,j}(s)| \le 2\ep_h\, \cap \, \mathcal{E}_s \ge \ep_h^{1/2}\right)
$$
and for the first part we have,
\beq
\label{AS11}
P(\mathcal{E}_s <\ep_h^{1/2})\le P\left( \exp\left[\int_0^s (-\theta_{i,j}(q)) \cdot d B^*_{i,j}(q)\right]>\ep_h^{-1/2}\exp(-sd\, \|F_0\|_\infty^2 )\right).
\eeq
To control the right hand side of the inequality above, we consider the $L^{4d}$ norm:
\beq
E\left[\left(\exp\left[\int_0^s (-\theta_{i,j}(q)) \cdot d B_{i,j}^*(q)\right]\right)^{4d}\right]
=E\left(\exp\left[\int_0^s (-4d\, \theta_{i,j}(q)) \cdot d B_{i,j}^*(q)\right]\right)
\eeq
and note that again by Girsanov Theorem,
$$
\mathcal{E}'_s= \exp\left[\int_0^s (-4d\theta_{i,j}(q)) \cdot d B_{i,j}^*(q)\right]\exp\left(-8d^2\int_0^s |\theta_{i,j}(q)|^2 \, dq \right)
$$
is again a Radon-Nikodym derivative. Thus we have 
$$
E(\mathcal{E}'_s)=1
$$
which combining with (\ref{new1}), implies 
\beq
\label{AS12}
E\left(\exp\left[\int_0^s (-4d\theta_{i,j}(q)) \cdot d B_{i,j}^*(q)\right]\right)\le \exp(16d^3s\,  \|F_0\|_\infty^2 )<\infty.
\eeq
Combining (\ref{AS11}), (\ref{AS12}) and Chebyshev's Inequality gives us 
\beq
P(\mathcal{E}_s <\ep_h^{1/2})\le \ep_h^{2d}\exp\left( (4d^2+16d^3)s\, \|F_0\|_\infty^2 \right).
\eeq 
Then for the second part, according to (\ref{Gir1}) we have
$$
\int_{ |\hat X_{h,i}(s)-\hat X_{h,j}(s)| \le 2\ep_h \, \cap \, \mathcal{E}_s \ge \ep_h^{1/2}} \mathcal{E}_s dP\le P\left( |B_i(s)-B_j(s)+\gamma_{i,j}|
\le 2\ep_h\right). 
$$
and thus 
\beq
\begin{aligned}
P&\left( |\hat X_{h,i}(s)-\hat X_{h,j}(s)| \le 2\ep_h \, \cap \, \mathcal{E}_s \ge \ep_h^{1/2}\right)\\
&\le \ep_h^{-1/2}P\left(  |B_i(s)-B_j(s)+\gamma_{i,j}|  \le 2\ep_h\right).
\end{aligned}
\eeq
Combining the two inequalities above, we have 
\beq
\label{AS13}
\begin{aligned}
P\left( |\hat X_{h,i}(s)-\hat X_{h,j}(s)|\le 2\ep_h\right)\le & 
\, \ep^{2d}\exp\left( (4d^2+16d^3)s\, \|F_0\|_\infty^2 \right)\\
&+\ep_h^{-1/2}P\left(  |B_i(s)-B_j(s)+\gamma_{i,j}|\le 2\ep_h\right)
\end{aligned}
\eeq
for any $s\ge 0$. Integrating (\ref{AS13}) on $[0,t]$ and averaging over all $i\not=j$, $i\le N_h$, we have 
\beq
\label{AS14}
\begin{aligned}
E_j(t)\le & \frac{\ep_h^{2d}}{(4d^2+16d^3)\, \|F_0\|_\infty^2 }\exp\left( (4d^2+16d^3)t\, \|F_0\|_\infty^2 \right)\\
&+\frac{1}{N_h \ep_h^{1/2}}\sum_{i: i\not=j, i\le N}\int_0^t P\left( |B_i(s)-B_j(s)+\gamma_{i,j}|\le 2\ep_h\right)\,ds. 
\end{aligned}
\eeq
According to (\ref{AS14}) to proof Proposition \ref{Lemma 4.1} it is sufficient to have the following lemma for standard Brownian motions:
\begin{lemma}
\label{Lemma 4.2}
For any $t\ge 0$, there is some constant $C^*_1(t)$ and $C^*_2(t)$ that depends only on $t$ such that
\beq
\frac{1}{N_h}\sum_{i: i\not=j, i\le N_h}\left[\int_0^t P\left( |B_i(s)-B_j(s)+\gamma_{i,j}|\le 2\ep_h\right)ds\right]\le C^*_1(t)\ep_h^{d}+C^*_2(t)\frac{1}{N_h}.
\eeq
\end{lemma}

\begin{proof}
We first note that for any $s$ and $i,j$, $B_i(s)-B_j(s)+\gamma_{i,j}$ has a $d$-dimensional normal distribution with mean $\gamma_{i,j}$ and variance $2s$. So we have
\beq
\label{AS15}
\begin{aligned}
&\int_0^t P\left( |B_i(s)-B_j(s)+\gamma_{i,j}|\le 2\ep_{h}\right)ds\\
&=\int_0^t \int_{  |x|\le 2\ep_{h}}\frac{1}{(4\pi s)^{d/2}}\exp\left( - \frac{|\gamma_{i,j}-x|^2}{4s}\right)\,dx ds\\
&=\int_{  |x|\le 2\ep_{h}}\int_0^t \frac{1}{(4\pi s)^{d/2}}\exp\left( - \frac{|\gamma_{i,j}-x|^2}{4s}\right)\, ds dx.
\end{aligned}
\eeq
To deal with equation (\ref{AS15}), we need to separate the case of $d=1$, $d=2$ and $d\ge 3$.

\mn {\bf Case 1: $d=1$.} In this case we simply use the bound 
$$
\begin{aligned}
&\int_0^t P\left(  |B_i(s)-B_j(s)+\gamma_{i,j}|  \le 2\ep_{h}\right)\,ds\\
&\le\int_{-2\ep_{h}}^{2\ep_{h}}\int_0^t s^{-1/2}\, ds dx=8\ep_{h} \sqrt{t}. 
\end{aligned}
$$
Averaging over $m$ gives us the desired result.

\mn {\bf Case 2: $d=2$.} In this case we have 
$$
\begin{aligned}
&\int_0^t P\left(   |B_i(s)-B_j(s)+\gamma_{i,j}| \le 2\ep_{h}\right)\, ds\\
&=\int_{  |x|\le 2\ep_{h}}\int_0^t \frac{1}{4\pi s}\exp\left( - \frac{|\gamma_{i,j}-x|^2}{4s}\right)\, ds dx.
\end{aligned}
$$
If $\gamma_{i,j}\ge 1$, then for all $\ep_{h}<1/4$ and $x<2\ep_{h}$ we have 
\beq
\label{AS16}
\begin{aligned}
&\int_{  |x|\le 2\ep_{h}}\int_0^t \frac{1}{4\pi s}\exp\left( - \frac{ |\gamma_{i,j}-x|^2}{4s}\right)ds dx\\
&\le \int_{ |x|\le 2\ep_{h}}\int_0^t \frac{1}{s}\exp\left( -\frac1{16s}\right)ds dx\\
&\le 16\ep_{h}^2\int_0^t \frac{1}{s}\exp\left( -\frac1{16s}\right)ds 
\end{aligned}
\eeq
When $\gamma_{i,j}< 1$ taking $h= \frac{ |\gamma_{i,j}-x |^2}{4s}$, we have
$$
\begin{aligned}
&\int_0^t P\left(  |B_i(s)-B_j(s)+\gamma_{i,j}|\le 2\ep_{h}\right)\,ds\\
&=\int_{ |x|\le 2\ep_{h}}\int_0^t \frac{1}{4\pi s}\exp\left( - \frac{|\gamma_{i,j}-x|^2}{4s}\right)\,ds dx\\
&=\int_{ |x|\le 2\ep_{h}}\int_{  |\gamma_{i,j}-x|^2/4t}^\infty \frac1{4\pi h} \exp(-h)\,dh dx. 
\end{aligned}
$$
Note that $h^{-1}\exp(-h)<h^{-1}$ and $h^{-1}\exp(-h)\le \exp(-h)$ when $h\ge 1$. We have
\beq
\label{AS17}
\begin{aligned}
\int_{  |\gamma_{i,j}-x|^2/4t}^\infty h^{-1}\exp(-h)\, dh&\le \int_{  |\gamma_{i,j}-x|^2/4t}^1 h^{-1} dh+\int_1^\infty e^{-h}dh\\
&\le 2|\log( |\gamma_{i,j}-x|)|+|\log t|+1+\log 4.
\end{aligned}
\eeq
Moreover, let $\delta_1=CN_h^{-1/2}$, where $C=L_D^{1/d}$, $\delta_2=\delta_1+4\ep_{h}$ and $M=[\delta_2^{-1}]+1$. For all $k=0,1,\cdots, M$ consider the following sets
\beq
A_k:= \big\{ i: k\delta_2\le |\gamma_{i,j}|<(k+1)\delta_2 \big\}. 
\eeq
By definition, it is easy to see that when $N_h$ is large and $\ep_{h}$ is small
\beq
\bigcup_{k=0}^M A_k\supset \big\{i: |\gamma_{i,j}|<1\big\}.
\eeq
If we first look at $A_0$, according to that $h\ge CN_h^{-1/2}$, the little balls $\{N(\gamma_{i,j},\delta)\}_{i\le N_h, i\not=j}$ (where $N(x,y)$ is the neighborhood of $x$ with radius $y$) have no intersections with each other. And for all $i\in A_0$, 
$$
N(\gamma_{i,j},\delta_1)\subset N(0,\delta_1+\delta_2)
$$
This immediately implies that 
$$
{\rm card} (A_0)\le \left(\frac{\delta_1+\delta_2}{\delta_1}\right)^2=\left( 2+\frac{4\ep_{h}}{C}N_h^{1/2}\right)^2\le 8+\frac{32\ep_{h}^2}{C^2}N_h, 
$$
since the sum of areas of disjoint disks with radius $\delta_1$ in $A_0$ cannot be larger than the area of $A_0$ itself. Thus we have
\beq
\label{AS18}
\frac{1}{N_h}\sum_{i\in A_0} \int_0^t P\left(  |B_i(s)-B_j(s)+\gamma_{i,j}| \le 2\ep_{h}\right)ds\le \frac{t}{N_h} {\rm card} (A_0)\le \frac{8t}{N_h}+\frac{32\ep_{h}^2t}{C^2}
\eeq
Similarly, for each $k\ge 1$ and $i\in A_k$, 
$$
N(\gamma_{i,j},\delta_1) \subset \big\{y: (k-1)\delta_2\le  |y|<(k+2)\delta_2 \big\}
$$
which implies that 
$$
\textrm{card}(A_k)\le \frac{[(k+2)^2-(k-1)^2]\delta_2^2}{\delta_1^2}\le 9k\left(1+\frac{4\ep_{h} N_h^{1/2}}{C}\right)^2. 
$$
Noting that for all $i\in A_k$ and $|x|\le 2\ep_{h}$
$$
|\log(|\gamma_{i,j}-x|)|\le \max\{\log 2, |\log(|k\delta_2-2\ep_{h}|)|\} \le \log 2+|\log(k\delta_2)|. 
$$
Thus according to (\ref{AS17}) and the inequality above
\beq
\label{AS19}
\begin{aligned}
&\frac{1}{N_h}\sum_{i\in A_k} \int_0^t P
\left(  |B_i(s)-B_j(s)+\gamma_{i,j}| \le 2\ep_{h}\right)\,ds \\
&\le \frac{1}{N_h}\sum_{i\in A_k}\left[
  \int_{ |x|\le 2\ep_{h}} |\log t|+\log 4+1+2|\log( |\gamma_{i,j}-x|)| \,dx\right]\\
&\le \frac{1}{N_h}\sum_{i\in A_k}\left[
  \int_{ |x|\le 2\ep_{h}} |\log t|+\log 16+1+ 2|\log(k\delta_2)|\,dx\right]\\
&\le \left[\frac{1}{N_h}\sum_{i\in A_k}16(|\log t|+\log 16+1)\ep_{h}^2\right]+\frac{288\ep_{h}^2 k\left(1+\frac{4\ep_{h} N_h^{1/2}}{C}\right)^2}{N_h}|\log(k\delta_2)|\\
&=\left[\frac{1}{N_h}\sum_{i\in A_k}16(|\log t|+\log 16+1)\ep_{h}^2\right]+\frac{288\ep_{h}^2}{C^2}\delta_2 \left[k\delta_2 |\log(k\delta_2)|\right]
\end{aligned}
\eeq
Summing over $k=0,1,\cdots, M$ we have 
\beq
\begin{aligned}
&\frac{1}{N_h}\sum_{i: \gamma_{i,j}< 1}
 \int_0^t P\left( |B_i(s)-B_j(s)+\gamma_{i,j}| \le 2\ep_{h}\right)ds\\
&\le \frac{8t}{N_h}+\frac{32\ep_{h}^2t}{C^2}+16(|\log t|+\log 16+1)\ep_{h}^2+\frac{288\ep_{h}^2}{C^2}\sum_{k=1}^{[\delta_2^{-1}]+1}\delta_2 \left[k\delta_2 |\log(k\delta_2)|\right]
\end{aligned}
\eeq
Note that the last term in the inequality above is a Riemann sum of function $x|\log x|$ and the fact that $x|\log x|\le \max\{\log 2, e^{-1}\}<1$ on $[0,2]$.
$$
\sum_{k=1}^{[\delta_2^{-1}]+1}\delta_2 \left[k\delta_2 |\log(k\delta_2)|\right]\le\int_0^2 dt=2. 
$$
So we have 
\beq
\label{AS110}
\begin{aligned}
&\frac{1}{N_h}\sum_{i: \gamma_{i,j}< 1}
 \int_0^t P\left( |B_i(s)-B_j(s)+\gamma_{i,j}| \le 2\ep_{h}\right)ds\\
&\le \frac{8t}{N_h}+\frac{32\ep_{h}^2t}{C^2}+16(|\log t|+\log 16+1)\ep_{h}^2+\frac{576\ep_{h}^2}{C^2}
\end{aligned}
\eeq
Combining (\ref{AS110}) and (\ref{AS16}), and letting
\beq
\label{AS111}
\begin{aligned}
&C^*_1(t) :=16\int_0^t \frac{1}{s}\exp\left( -1/16s\right)ds+\frac{32t}{C^2}+16(|\log t|+\log 16+1)+\frac{576}{C^2}\\
&C^*_2(t) :=8t
\end{aligned}
\eeq
we finally get 
$$
\frac{1}{N_h}\sum_{i: i\not=j, i\le N_h}
\int_0^t P\left( |B_i(s)-B_j(s)+\gamma_{i,j}| \le 2\ep_{h}\right)\,ds
\le C^*_1(t)\ep_{h}^{d}+ C^*_2(t)\frac{1}{N_h}
$$
when $d=2$, and the proof for case 2 is complete.

\mn {\bf  Case 3: $d\ge 3$.} The proof in this case is similar but simpler than the case of $d=2$. Again we have 
$$
\begin{aligned}
&\int_0^t P\left( |B_i(s)-B_j(s)+\gamma_{i,j}| \le 2\ep_{h}\right)\,ds \\
&=\int_{|x|\le 2\ep_{h}}
\int_0^t \frac{1}{(4\pi s)^{d/2}}\exp\left( -  \frac{|\gamma_{i,j}-x|^2}{4s} \right)\, ds dx.
\end{aligned}
$$
If $\gamma_{i,j}\ge 1$, then for all $\ep_{h}<1/4$ and $ |x|<2\ep_{h}$ we have 
\beq
\label{AS112}
\begin{aligned}
&\int_{|x|\le 2\ep_{h}}
\int_0^t \frac{1}{(4\pi s)^{d/2}}\exp\left( - \frac{|\gamma_{i,j}-x|^2}{4s}\right)\,ds dx\\
&\le \int_{  |x|\le 2\ep_{h}}\int_0^t \frac{1}{s^{d/2}}\exp\left( -\frac1{16s}\right)\, ds dx\\
&\le 2^{2d}\ep_{h}^d \int_0^t \frac{1}{s^{d/2}}\exp\left( -\frac1{16s}\right)\, ds 
\end{aligned}
\eeq
When $\gamma_{i,j}< 1$ taking $h= \frac{|\gamma_{i,j}-x|^2}{4s}$, we have
\beq
\label{AS113}
\begin{aligned}
&\int_0^t P\left(  |B_i(s)-B_j(s)+\gamma_{i,j}| \le 2\ep_{h}\right)\, ds\\
&=\int_{  |x|\le 2\ep_{h}}\int_0^t \frac{1}{(4\pi s)^{d/2}}\exp\left( - \frac{|\gamma_{i,j}-x|^2}{4s}\right)\, ds dx\\
&<C_d \int_{  |x|\le 2\ep_{h}}|\gamma_{i,j}-x|^{-d+2}dx. 
\end{aligned}
\eeq
where constant 
$$
C_d  := 2^{4d}\int_{0}^\infty h^{-2+d/2}\exp(-h)dh.
$$
Then again we can define $\delta_1=CN_h^{-1/d}$, where $C=L_D^{1/d}$, $\delta_2=\delta_1+4\ep_{h}$ and $M=[\delta_2^{-1}]+1$. For all $k=0,1,\cdots, M$ consider the following sets
\beq
A_k:= \big\{ i: k\delta_2\le |\gamma_{i,j}|<(k+1)\delta_2 \big\}. 
\eeq
such that
$$
\bigcup_{k=0}^M A_k\supset \big\{i: |\gamma_{i,j}|<1\big\}.
$$
Then similarly, we have 
$$
{\rm card} (A_0)\le \left(\frac{\delta_1+\delta_2}{\delta_1}\right)^d=\left( 2+\frac{4\ep_{h}}{C}N_h^{1/d}\right)^d\le 2^{2d-1}+\frac{2^{3d-1}\ep_{h}^d}{C^d}N_h
$$
and
$$
\textrm{card}(A_k)\le \frac{[(k+2)^d-(k-1)^d]\delta_2^d}{\delta_1^d}\le 3^dk^{d-1}\left(1+\frac{4\ep_{h} N_h^{1/d}}{C}\right)^d. 
$$
Thus
\beq
\label{AS114}
\frac{1}{N_h}\sum_{i\in A_0} \int_0^t P\left(  |B_i(s)-B_j(s)+\gamma_{i,j}| \le 2\ep_{h}\right)\,ds\le \frac{t}{N_h}{\rm card} (A_0) \le \frac{2^{2d-1} t}{N_h}+\frac{2^{3d-1}t\ep_{h}^d}{C^d}
\eeq
and
\beq
\begin{aligned}
&\frac{1}{N_h}\sum_{i\in A_k} \int_0^t P\left( |B_i(s)-B_j(s)+\gamma_{i,j}|\le 2\ep_{h}\right)\,ds\\
&\le \frac{C_d}{N_h}\sum_{m\in A_k} \int_{  |x|\le 2\ep_{h}}|\gamma_{i,j}-x|^{-d+2}dx\\
&\le \frac{C_d}{N_h} 3^dk^{d-1}\left(1+\frac{4\ep_{h} N_h^{1/d}}{C}\right)^d (2^{2d} \ep_{h}^d)\times \left(2^d [k(CN_h^{-1/d}+2\ep_{h})]^{-d+2}\right)
\end{aligned}
\eeq
 Summing over $k=0,1,\cdots,M$, 
\beq
\begin{aligned}
&\frac{1}{N_h}\sum_{i: \gamma_{i,j}< 1} 
\int_0^t P\left( |B_i(s)-B_j(s)+\gamma_{i,j}|\le 2\ep_{h}\right)\,ds\\
&\le \frac{2^{2d-1} t}{N_h}+\frac{2^{3d-1}t\ep_{h}^d}{C^d}+C_d \left( \frac{24}{C}\right)^d \ep_{h}^d \sum_{k=1}^{[\delta_2^{-1}]+1}[(k\delta_2)\delta_2].
\end{aligned}
\eeq
Again for the last term we have 
$$
\sum_{k=1}^{[\delta_2^{-1}]+1}[(k\delta_2)\delta_2]\le \int_0^2 t dt=2. 
$$
Thus
\beq
\label{AS115}
\begin{aligned}
&\frac{1}{N_h}\sum_{i: \gamma_{i,j}< 1} 
\int_0^t P\left( |B_i(s)-B_j(s)+\gamma_{i,j}|\le 2\ep_{h}\right)\,ds\\
&\le \frac{2^{2d-1} t}{N_h}+\frac{2^{3d-1}t\ep_{h}^d}{C^d}+2C_d \left( \frac{24}{C}\right)^d \ep_{h}^d
\end{aligned}
\eeq
Then combining (\ref{AS112}) and (\ref{AS115}), and letting
\beq
\label{AS116}
\begin{aligned}
&C^*_1(t)  :=2^{2d} \int_0^t \frac{1}{s^{d/2}}\exp\left( -\frac1{16s}\right)\, ds+ \frac{2^{3d-1}t}{C^d}+2C_d \left( \frac{24}{C}\right)^d \\
&C^*_2(t)  :=2^{2d-1}t
\end{aligned}
\eeq
We complete the proof of case 3.
\end{proof}
\mn With Lemma \ref{Lemma 4.2} proved, then according to (\ref{AS14}), let 
\beq
\label{AS117}
\begin{aligned}
C_1(t)&= \frac{1}{(4d^2+16d^3)\, \|F_0\|_\infty^2 }\exp\left( (4d^2+16d^3)t\, \|F_0\|_\infty^2 \right)+C^*_1(t)\\
C_2(t)&=C^*_2(t). 
\end{aligned}
\eeq
Then the proof of Proposition \ref{Lemma 4.1} is complete. 
\end{proof}

\section{Estimation of the Truncation Error}
Back on estimating the errors times, we will first estimate the term $Tr(s)$ of the truncation error and have the proposition as follows: 

\begin{prop}
\label{truncate}
For $Tr(s)$ be the truncation error which is defined as 
\begin{align*}
\label{Prop14}
Tr(s)
=&2 \int_0^s \int_{{\mathbb R}^d} 
\left[\sum_{i=1}^{N_h} h^d\rho_0(\theta_{i,h}h) \varphi_{\ep_h}\left( x-\hat X_{h,i}(q)\right)
\left( F(x,q) - F\left(\hat X_{h,i}(q),q\right)  \right) \right]
\cdot \nabla  (\rho -\hat \rho_{h})(x,q) \, dx dq\\
&+h^{2d}s \|\nabla \varphi_{\ep_h}\|^2 \sum_{i=1}^{N_h} \rho_0(\theta_{i,h}h)^2
\end{align*}
in Proposition 1. Then we have when $h$ is sufficiently small, 
\beq
\label{TR1}
\begin{aligned}
E&\left(\max\left\{ \sup_{s\le t}\left\{Tr(s)-\frac{1}{2} \int_0^s\|\nabla (\rho-\hat\rho_{h})(\cdot,q) \|^2\, dq\right\}, 0\right\}\right)\\
&\le h^dt\ep_h^{-d-2}\|\nabla \varphi\|^2\|\rho_0\|_\infty U_D+2U_D^2 L_F^2\|\varphi\|^2 \|\rho_0\|_\infty^2 \left[\frac{t}{N_h\ep_h^{d-2}}+C_1(t)\ep_h+C_2(t)\frac{1}{N_h\ep_h^{d-1}}\right]
\end{aligned}
 \eeq
 where $C_1$ and $C_2$ are the constants in Proposition \ref{Lemma 4.1}. 
\end{prop}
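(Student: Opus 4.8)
The plan is to split $Tr(s)=Tr_1(s)+Tr_2(s)$ into the space--time integral $Tr_1$ and the deterministic drift $Tr_2(s)=h^{2d}s\|\nabla\varphi_{\ep_h}\|^2\sum_i\rho_0(\theta_{i,h}h)^2$. The term $Tr_2$ is deterministic and nondecreasing in $s$, so $\sup_{s\le t}Tr_2(s)=Tr_2(t)$; using the scaling $\|\nabla\varphi_{\ep_h}\|^2=\ep_h^{-d-2}\|\nabla\varphi\|^2$ together with $h^d\sum_i\rho_0(\theta_{i,h}h)\le U_D$ (a Riemann sum for $\int\rho_0=1$, bounded via \eqref{Distance1}) and $\rho_0\le\|\rho_0\|_\infty$ produces the first summand $h^dt\ep_h^{-d-2}\|\nabla\varphi\|^2\|\rho_0\|_\infty U_D$ on the right of \eqref{TR1}.

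For $Tr_1$, write $G(x,q)=\sum_{i=1}^{N_h}h^d\rho_0(\theta_{i,h}h)\varphi_{\ep_h}(x-\hat X_{h,i}(q))\big(F(x,q)-F(\hat X_{h,i}(q),q)\big)$, so that $Tr_1(s)=2\int_0^s\int_{\mathbb R^d}G\cdot\nabla(\rho-\hat\rho_h)\,dx\,dq$. Cauchy--Schwarz in $x$ and Young's inequality $2ab\le\frac12 b^2+2a^2$ give, for each fixed $q$, $2\int G\cdot\nabla(\rho-\hat\rho_h)\,dx\le\frac12\|\nabla(\rho-\hat\rho_h)\|^2+2\|G(\cdot,q)\|^2$. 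Hence $Tr(s)-\frac12\int_0^s\|\nabla(\rho-\hat\rho_h)\|^2\,dq\le Tr_2(t)+2\int_0^t\|G(\cdot,q)\|^2\,dq$, and since the right side is nonnegative and independent of $s$, taking the max with $0$, the supremum over $s\le t$, and then expectation reduces the whole estimate to controlling $E\int_0^t\|G(\cdot,q)\|^2\,dq$.

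The heart of the proof is bounding $\|G\|^2$. Expanding the square yields a double sum over $(i,j)$; since $\varphi$ is supported in $\{|x|_\infty\le\frac12\}$, on the support of $\varphi_{\ep_h}(x-\hat X_{h,i})$ one has $|x-\hat X_{h,i}|\le\ep_h$, so the Lipschitz bound $|F(x,q)-F(\hat X_{h,i}(q),q)|\le L_F|x-\hat X_{h,i}|\le L_F\ep_h$ applies to every factor. For the diagonal terms $i=j$ I use $\int\varphi_{\ep_h}^2=\ep_h^{-d}\|\varphi\|^2$ and $h^{2d}\sum_i\rho_0(\theta_{i,h}h)^2\le U_D^2\|\rho_0\|_\infty^2/N_h$ (from $h^dN_h\le U_D$), which after integrating over $[0,t]$ and multiplying by the factor $2$ produces exactly the term $2U_D^2L_F^2\|\varphi\|^2\|\rho_0\|_\infty^2\,t/(N_h\ep_h^{d-2})$. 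The off-diagonal terms $i\ne j$ are the essential point: the product $\varphi_{\ep_h}(x-\hat X_{h,i})\varphi_{\ep_h}(x-\hat X_{h,j})$ vanishes unless the two blobs overlap, i.e.\ unless $|\hat X_{h,i}(q)-\hat X_{h,j}(q)|\le 2\ep_h$, and on that event $\int\varphi_{\ep_h}(x-\hat X_{h,i})\varphi_{\ep_h}(x-\hat X_{h,j})\,dx\le\ep_h^{-d}\|\varphi\|^2$ by Cauchy--Schwarz. Thus the off-diagonal contribution is at most $L_F^2\ep_h^{2-d}\|\varphi\|^2\|\rho_0\|_\infty^2\,h^{2d}\sum_{i\ne j}\mathbf 1\{|\hat X_{h,i}(q)-\hat X_{h,j}(q)|\le 2\ep_h\}$.

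Taking expectation and integrating over $[0,t]$ converts the indicator sum into $\sum_{j}\sum_{i\ne j}\int_0^t P(|\hat X_{h,i}-\hat X_{h,j}|\le2\ep_h)\,dq=N_h\sum_j E_j(t)$, at which point Proposition \ref{Lemma 4.1} gives $\sum_jE_j(t)\le N_hC_1(t)\ep_h^{d-1}+C_2(t)\ep_h^{-1}$. Combining with $h^{2d}N_h^2\le U_D^2$ and $h^{2d}N_h\le U_D^2/N_h$ turns the two pieces of this separation bound into the terms $C_1(t)\ep_h$ and $C_2(t)/(N_h\ep_h^{d-1})$, which after the overall factor $2$ match the last two summands in \eqref{TR1}. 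Assembling the diagonal, off-diagonal, and $Tr_2$ contributions gives \eqref{TR1}. The main obstacle is precisely the off-diagonal estimate: one must recognise that the cross terms are supported on near-collisions of the self-consistent particles and then invoke the separation Proposition \ref{Lemma 4.1} to control the expected time spent at distance $\le2\ep_h$; the diagonal and $Tr_2$ terms, along with all the weighted-sum bounds, are routine consequences of \eqref{Distance1} and the scaling of $\varphi_{\ep_h}$.
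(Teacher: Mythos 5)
Your proposal is correct and follows essentially the same route as the paper: the deterministic term is handled by scaling and \eqref{Distance1}, Young's inequality absorbs half of the gradient term, and the key step—expanding $\|G\|^2$ into a double sum whose off-diagonal terms are supported on near-collisions $|\hat X_{h,i}-\hat X_{h,j}|\le 2\ep_h$ and then invoking Proposition \ref{Lemma 4.1} via Fubini—is exactly the paper's argument (the paper writes the diagonal and off-diagonal pieces together as $\mathbb{R}^*(s)$ before splitting them in \eqref{R*1}, but the estimates are identical). No gaps; the bookkeeping with $h^dN_h\le U_D$ matches the stated constants.
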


\begin{proof}
First for the constant term, noting that $\rho_0$ is a bounded function and that 
$$
\|\nabla \varphi_{\ep_h}\|^2 =\ep_h^{-2d-2}\int_{\mathbb{R}^d} \left|\nabla\varphi\left(\frac{x}{\ep_h}\right)\right|^2 dx=\ep_h^{-d-2}\|\nabla \varphi\|^2,
$$
and that $\ep_h=h^{1/4d}$, we have for any $s\in [0,t]$
\beq
\label{constant}
\begin{aligned}
sh^{2d} \|\nabla \varphi_{\ep_h}\|^2 \sum_{i=1}^{N_h} \rho_0(\theta_{i,h}h)^2&\le th^d\ep_h^{-d-2}\|\nabla \varphi\|^2\|\rho_0\|_\infty U_D
 \end{aligned}
\eeq
when $h$ is sufficiently small. Thus, we will concentrate on the non-constant part in the truncation error. By Cauchy Schwarz inequality, 
\begin{align*}
2 \int_0^s& \int_{{\mathbb R}^d} 
\left[\sum_{i=1}^{N_h} h^d\rho_0(\theta_{i,h}h) \varphi_{\ep_h}\left( x-\hat X_{h,i}(q)\right)
\left( F(x,q) - F\left(\hat X_{h,i}(q),q\right)  \right) \right]
\cdot \nabla  (\rho -\hat \rho_{h})(x,q) \, dx dq\\
&\le2 \int_0^s \int_{{\mathbb R}^d} 
\left|\sum_{i=1}^{N_h} h^d\rho_0(\theta_{i,h}h) \varphi_{\ep_h}\left( x-\hat X_{h,i}(q)\right)
\left( F(x,q) - F\left(\hat X_{h,i}(q),q\right)  \right) \right|^2 dx dq\\
&+\frac{1}{2} \int_0^s\|\nabla (\rho-\hat\rho_{h})(\cdot,q)\|^2 dq.
\end{align*}
Let 
\beq
\label{TR2}
Res(s)=\int_0^s \int_{{\mathbb R}^d} 
\left|\sum_{i=1}^{N_h} h^d\rho_0(\theta_{i,h}h) \varphi_{\ep_h}\left( x-\hat X_{h,i}(q)\right)
\left( F(x,q) - F\left(\hat X_{h,i}(q),q\right)  \right) \right|^2 dx dq.
\eeq
Then by definition in order to show Proposition 2 it is sufficient to prove that 
\beq
\label{Res1}
E\left(\sup_{s\le t} Res(s)\right)\le U_D^2 L_F^2\|\varphi\|^2 \|\rho_0\|_\infty^2 \left[\frac{t}{N_h\ep_h^{d-2}}+C_1(t)\ep_h+C_2(t)\frac{1}{N_h\ep_h^{d-1}}\right]
\eeq
when $h$ is sufficiently small.  To show this, first It is easy to see that we can rewrite the integrand of $Res(s)$ as 
$$
\sum_{i,j=1,2,\cdots,N_h} h^{2d}\rho_0(\theta_{i,h}h)\rho_0(\theta_{j,h}h) R_{i,j}(x,q)
$$
where
\begin{align*}
R_{i,j}(x,q)=\varphi_{\ep_h}\left( x-\hat X_{h,i}(q)\right)
\left( F(x,q) - F\left(\hat X_{h,i}(q),q\right) \right)\cdot \varphi_{\ep_h}\left( x-\hat X_{h,j}(q)\right)
\left( F(x,q) - F\left(\hat X_{h,j}(q),q\right) \right)
\end{align*}
Note that for any $i,j\le N$, $R_{i,j}(x,q)\equiv0$ when $|X_j(q)-X_i(q)|>2\ep_h$. And when $|X_i(q)-X_j(q)| \le 2\ep_h$, noting that $F$ is Lipschitz continuous with the Lipschitz constant less than or equal to $L_F$, 
$$
\left| R_{i,j}(x,q)\right|\le L_F^2\ep_h^2 \left| \varphi_{\ep_h}\left( x-\hat X_{h,i}(q)\right) \varphi_{\ep_h}\left( x-\hat X_{h,j}(q)\right)\right|
$$
Thus for all $i,j\le N_h$, we have the spatial integral 
\beq
\label{Res2}
\int_{{\mathbb R}^d} |R_{i,j}(x,q)| \, dx\le \ep^{2-d} L_F^2\|\varphi\|^2\, \mathbb{1}_{|\hat X_{h,i}(q)-\hat X_{h,j}(q)|\le 2\ep_h}.
\eeq 
Thus we have for any $s\in [0,t]$, 
\beq
\label{Res3}
Res(s)\le \mathbb{R}^*(s)=h^{2d} \ep_h^{2-d} L_F^2\|\varphi\|^2\sum_{i,j=1,2,\cdots,N_h}\rho_0(\theta_{i,h}h)\rho_0(\theta_{j,h}h) \int_0^s \mathbb{1}_{|\hat X_{h,i}(q)-\hat X_{h,j}(q)|\le 2\ep_h} dq
\eeq
and note that $\mathbb{R}^*(s)$ is monotonically increasing over $s$. Thus to prove Proposition 2, it suffices to show that 
$$
E(\mathbb{R}^*(t))\le U_D^2 L_F^2\|\varphi\|^2 \|\rho_0\|_\infty^2 \left[\frac{t}{N_h\ep_h^{d-2}}+C_1(t)\ep_h+C_2(t)\frac{1}{N_h\ep_h^{d-1}}\right]
$$
when $h$ is sufficiently small. Noting that $\hat X_{h,i}(s)-\hat X_{h,j}(s)$ is continuous and adaptable to $\mathcal{F}^{N}_t$ (which implies progressive), $\mathbb{1}_{|\hat X_{h,i}(s)-\hat X_{h,j}(s)|\le 2\ep_h}\times \mathbb{1}_{0\le s\le t}$ is measurable on $[0,t]\times \Omega$ and bounded and thus integrable. By Fubini's Theorem, 
\beq
\label{R*1}
\begin{aligned}
E[\mathbb{R}^*(t)]=&h^{2d} \ep_h^{2-d} L_F^2\|\varphi\|^2\left(\sum_{i=1:N_h}\rho_0(\theta_{i,h}h)^2t\right.\\
+ &\left.\sum_{i\not=j=1,2,\cdots,N_h} \rho_0(\theta_{i,h}h)\rho_0(\theta_{j,h}h) \int_0^tP\left(|\hat X_{h,i}(s)-\hat X_{h,j}(s)|\le 2\ep_h\right) ds\right)\\
\le& h^{d} U_D\ep_h^{2-d} L_F^2\|\varphi\|^2 \|\rho_0\|_\infty^2 \left[t+\sum_{j=1}^{N_h} E_j(t)\right]
\end{aligned}
\eeq
where for any $j=1,2,\cdots,N_h$, $E_j(t)$ is the separation term defined in \eqref{R*2} in Section 4. With Proposition \ref{Lemma 4.1} proved, then combining \eqref{lemma 4.1}, \eqref{Res1}, \eqref{Res3}, and \eqref{R*1}  we have the inequality in Proposition \ref{truncate}. 
\end{proof}

\section{Estimation of the Martingale Error}
In this section, we estimate the martingale error $\bar M_s=M_s+\tilde M_s$. Out first result is about $M_s$: 
\begin{lemma}\label{L1}
For all $s\in [0,t]$, we have the second moment control 
\beq
\label{L1.1}
E\big(M_s^2 \big)\le \frac{4h^{d}U_D}{\ep^{d+2}}\|\nabla\varphi\|^2\|\rho_0\|^2_\infty\int_0^s\|\rho(\cdot,q)\|^2\, dq.
\eeq
\end{lemma}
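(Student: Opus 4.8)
The plan is to treat $M_s$ as a sum of mutually orthogonal martingales and apply the It\^o isometry. Recall from \eqref{Prop11} that $M_s = \sum_{i=1}^{N_h}\rho_0(\theta_{i,h}h)\,M_s^i$, where each $M_s^i = 2h^d\int_0^s g_i(q)\cdot dB_i(q)$ with the $\mathbb{R}^d$-valued integrand $g_i(q) = \int_{\mathbb{R}^d}\rho(x,q)\,\nabla\varphi_{\ep_h}(x-\hat X_{h,i}(q))\,dx$. Since the $\{B_i\}_{i=1}^{N_h}$ are independent standard Brownian motions, the stochastic integrals $M_s^i$ driven by distinct $B_i$ are orthogonal in $L^2$, so all cross terms vanish in expectation and $E(M_s^2) = \sum_{i=1}^{N_h}\rho_0(\theta_{i,h}h)^2\,E\big((M_s^i)^2\big)$. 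This reduces the estimate to a single-index computation.

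First I would bound each $E((M_s^i)^2)$. By the It\^o isometry for a vector-valued integrand against a $d$-dimensional Brownian motion, $E((M_s^i)^2) = 4h^{2d}\int_0^s E\big(|g_i(q)|^2\big)\,dq$. For the integrand, Cauchy--Schwarz in the space variable applied to each component $k$ gives $|\int_{\mathbb{R}^d}\rho(x,q)\,\partial_k\varphi_{\ep_h}(x-\hat X_{h,i}(q))\,dx| \le \|\rho(\cdot,q)\|\,\|\partial_k\varphi_{\ep_h}(\cdot-\hat X_{h,i}(q))\|$; summing the squares over $k$ yields $|g_i(q)|^2 \le \|\rho(\cdot,q)\|^2\,\|\nabla\varphi_{\ep_h}(\cdot-\hat X_{h,i}(q))\|^2$. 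The crucial simplification is that the $L^2$ norm is translation-invariant, so $\|\nabla\varphi_{\ep_h}(\cdot-\hat X_{h,i}(q))\| = \|\nabla\varphi_{\ep_h}\|$ regardless of the random particle position. Hence the bound on $|g_i(q)|^2$ is in fact deterministic, and $E((M_s^i)^2) \le 4h^{2d}\|\nabla\varphi_{\ep_h}\|^2\int_0^s\|\rho(\cdot,q)\|^2\,dq$.

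Finally I would assemble the pieces using the rescaling identity $\|\nabla\varphi_{\ep_h}\|^2 = \ep_h^{-d-2}\|\nabla\varphi\|^2$ (already recorded just before \eqref{constant}) together with the elementary bound $\rho_0(\theta_{i,h}h)^2 \le \|\rho_0\|_\infty^2$ and the volume count $h^d N_h \le U_D$ from \eqref{Distance1}, which gives $\sum_{i=1}^{N_h}\rho_0(\theta_{i,h}h)^2 \le \|\rho_0\|_\infty^2 N_h \le \|\rho_0\|_\infty^2 U_D h^{-d}$. Multiplying these factors, the product $h^{-d}\cdot h^{2d} = h^d$ survives and produces exactly the claimed constant $4h^d U_D\,\ep_h^{-d-2}\|\nabla\varphi\|^2\|\rho_0\|_\infty^2$ in \eqref{L1.1}.

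There is no genuine analytic obstacle here; the argument is a routine second-moment estimate. The only points requiring care are (i) correctly invoking the orthogonality of stochastic integrals driven by independent Brownian motions so that only the diagonal terms remain, and (ii) handling the vector structure of $\nabla\varphi_{\ep_h}$ in the Cauchy--Schwarz step, keeping the component-wise sum consistent with $|g_i|^2 = \sum_k (g_i)_k^2$. The translation-invariance observation that eliminates all dependence on the random positions $\hat X_{h,i}(q)$ is what renders the final bound deterministic and clean.
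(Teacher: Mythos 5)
Your proposal is correct and follows essentially the same route as the paper: decompose $M_s$ into the stochastic integrals $M_s^i$ driven by the independent Brownian motions, kill the cross terms by orthogonality, bound each diagonal term via the It\^o isometry together with Cauchy--Schwarz and the translation invariance of $\|\nabla\varphi_{\ep_h}\|$, and finish with the rescaling identity and the volume count $h^dN_h\le U_D$. The only cosmetic difference is that the paper splits each $M_s^i$ further into scalar coordinate martingales $M_s^{i,k}$ and verifies the vanishing quadratic covariations explicitly, whereas you handle the vector-valued integrand in one stroke; the substance is identical.
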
 
\begin{proof}
Here and in Lemma \ref{L2}, we will use the natural filtration $\mathcal{F}^{N_h}_s$, which is generated by the Brownian motions $B_1(s),\cdots B_{N_h}(s)$. Note that $M_s=\sum_{i=1}^{N_h} \rho_0(\theta(i,h)h)M_s^i$ where 
\begin{align*}
M_s^i&= 2h^d\int_0^s \int_{{\mathbb R}^d}\rho(x,q) \nabla \varphi_{\ep_h}\left(x-\hat X_{h,i}(s)\right)dx\cdot d B_i(q)=\sum_{k=1}^d  M_s^{i,k},
\end{align*}
and
$$
M_s^{i,k}=2h^d \int_0^s \int_{{\mathbb R}^d}\rho(x,q) \frac{\partial \varphi_{\ep_h}\left(x-\hat X_{h,i}(q)\right)}{\partial x_k}dx ~ d B^{(k)}_i(q).
$$
The $B^{(k)}_i(s)$ in the equation above is the $k$th coordinate of the Brownian motion $B_i(s)$ and it is itself a one dimension Brownian motion and a square integrable martingale under filtration $\mathcal{F}^{N_h}_s$ noting that $B^{(k)}_i(s)$ is independent to $B^{(h)}_j(s)$ for all $h\not=k$, or $i\not=j$. For each $i$ and $k$ we have the integrand 
$$
Y_{i,k}(q)=\int_{{\mathbb R}^d}\rho(x,q) \frac{\partial \varphi_{\ep_h}\left(x-\hat X_{h,i}(q)\right)}{\partial x_k}dx
$$
continuous and adapted to filtration $\mathcal{F}^{N}_q$. Moreover 
$$
|Y_{i,k}(q)|= \left|\int_{{\mathbb R}^d}\rho(x,q) \frac{\partial \varphi_{\ep_h}\left(x-\hat X_{h,i}(q)\right)}{\partial x_k}dx\right|  
\le \left\|  \frac{\partial \varphi_{\ep_h}}{\partial x_k}  \right\| \times\|\rho(\cdot,s)\| <\infty.
$$
Thus by Theorem 5.2.3 in \cite{EK}, for all $i\in \{1, 2,\cdots, N\}$ and $k\in\{1,2,\cdots, d\}$, $M_s^{i,k}$ is a square integrable martingale with 
\beq
\label{L2IK}
E[(M_s^{i,k})^2]=4h^{2d} E\left( \int_0^s Y_{i,k}(q)^2 dq\right)\le 4h^{2d}\left\|  \frac{\partial \varphi_{\ep_h}}{\partial x_k}  \right\|^2 \int_0^t \|\rho(\cdot,s)\|^2ds.
\eeq
And for all $(i,k)\not=(j,h)$ we have that 
\beq
\label{QCov1}
\begin{aligned}
\langle M_s^{i,k},M_s^{j,h}\rangle&=\langle Y_{i,k}\cdot B_i^{(k)}(s), Y_{i,h}\cdot B_j^{(h)}(s)\rangle\\
&=\int_0^s Y_{i,k}(s)\cdot Y_{j,h}(q) \, d\langle B_i^{(k)}(q), B_j^{(h)}(q)\rangle\\
&=\int_0^s Y_{i,k}(s)\cdot Y_{j,h}(q) \, d0=0,
\end{aligned}
\eeq
since $\langle B_i^{(k)}(s), B_j^{(h)}(s)\rangle\equiv 0$ for two independent Brownian motions, where $\langle X_s,Y_s\rangle$ is the quadratic covariance between the two processes $X_s$ and $Y_s$, defined by
$$
\langle X_s,Y_s\rangle=\frac{1}{2}(\langle X_s+Y_s\rangle-\langle X_s\rangle-\langle Y_s\rangle).
$$
Noting that $M_s^{i,k}$ and $M_s^{j,h}$ are both square integrable martingales, (\ref{QCov1}) implies that 
\beq
\label{MeanMix}
E\left( M_s^{i,k} M_s^{j,h}\right)\equiv 0.
\eeq 
Combining (\ref{L2IK}) and (\ref{MeanMix}) immediately gives us 
$$
E[(M_s^i)^2]=\sum_{k=1}^d E[(M_s^{i,k})^2] \le 4h^{2d} \|\nabla \varphi_{\ep_h}\|^2 \int_0^s\|\rho(\cdot,q)\|^2\, dq. 
$$ 
and
$$
E(M_s^i M_s^j)=0
$$
which implies that 

\beq
E\big( (M_s)^2\big)=\sum_{i=1}^{N_h} \rho_0(\theta(i,h)h)^2E[(M_t^i)^2]\le \frac{4h^{d}U_D}{\ep^{d+2}}\|\nabla\varphi\|^2\|\rho_0\|^2_\infty\int_0^s\|\rho(\cdot,q)\|^2\, dq.
\eeq
\end{proof}

\mn Then we estimate the second part of the martingale error and have a lemma as follows: 
\begin{lemma}\label{L2}
For all $s\in [0,t]$, we have the second moment control 
\beq
\label{L 1.2}
E\big( (\tilde M_s)^2 \big)\le \frac{4h^{d}U_D^3}{\ep^{2d+2}}\|\varphi\|^2\|\nabla \varphi\|^2\|\rho_0\|_\infty^4 s.
\eeq
\end{lemma}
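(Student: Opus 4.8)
The plan is to follow the proof of Lemma \ref{L1} almost line for line, since $\tilde M_s$ shares its structure with $M_s$: it is a weighted sum over $i$ of stochastic integrals, each driven only by the single Brownian motion $B_i$. Concretely, I would write $\tilde M_s=\sum_{i=1}^{N_h}\rho_0(\theta_{i,h}h)\,\tilde M_s^i$ with $\tilde M_s^i=2h^{2d}\int_0^s G_i(q)\cdot dB_i(q)$, where $G_i(q)\in\mathbb{R}^d$ is the integrand appearing in \eqref{Prop13}, namely the $x$-integral of $\varphi_{\ep_h}(x)$ against the difference of the two gradient sums. First I would record that $G_i$ is adapted to $\mathcal{F}^{N_h}_q$ and continuous in $q$ (the paths $\hat X_{h,j}$ are continuous and $\varphi\in C_0^\infty$), and that each component is uniformly bounded. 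Hence by Theorem 5.2.3 of \cite{EK} each $\tilde M_s^i$ is a square integrable martingale, so all the manipulations below are legitimate.

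Next I would decorrelate the summands exactly as in \eqref{QCov1}. Because $\tilde M^i$ is driven only by $B_i$ and the family $\{B_i\}$ is independent, the quadratic covariance $\langle \tilde M^i,\tilde M^j\rangle_s$ vanishes identically for $i\neq j$, whence $E(\tilde M_s^i\tilde M_s^j)=0$ and
\[
E\big((\tilde M_s)^2\big)=\sum_{i=1}^{N_h}\rho_0(\theta_{i,h}h)^2\,E\big((\tilde M_s^i)^2\big).
\]
Applying the It\^o isometry to each term gives $E\big((\tilde M_s^i)^2\big)=4h^{4d}\,E\!\int_0^s |G_i(q)|^2\,dq$, so the whole problem reduces to a pointwise bound on $|G_i(q)|$.

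For that bound I would handle the $N_h-1$ summands in $G_i(q)$ one at a time: each is of the form $\rho_0(\theta_{j,h}h)\int_{\mathbb{R}^d}\varphi_{\ep_h}(x)\,\nabla\varphi_{\ep_h}(x+a)\,dx$ for a shift $a$, which by Cauchy--Schwarz is at most $\|\rho_0\|_\infty\|\varphi_{\ep_h}\|\,\|\nabla\varphi_{\ep_h}\|$. The triangle inequality then yields $|G_i(q)|\le (N_h-1)\|\rho_0\|_\infty\|\varphi_{\ep_h}\|\,\|\nabla\varphi_{\ep_h}\|$. Assembling everything with $\sum_i\rho_0(\theta_{i,h}h)^2\le N_h\|\rho_0\|_\infty^2$, the scalings $\|\varphi_{\ep_h}\|^2=\ep_h^{-d}\|\varphi\|^2$ and $\|\nabla\varphi_{\ep_h}\|^2=\ep_h^{-d-2}\|\nabla\varphi\|^2$ (the latter already used in Proposition \ref{truncate}), and the counting bound $h^dN_h\le U_D$ from \eqref{Distance1} applied three times so that $h^{4d}N_h^3\le U_D^3h^d$, gives precisely $E\big((\tilde M_s)^2\big)\le \frac{4h^d U_D^3}{\ep_h^{2d+2}}\|\varphi\|^2\|\nabla\varphi\|^2\|\rho_0\|_\infty^4\,s$.

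I do not expect a genuine obstacle here. The only points requiring care are the square-integrability/martingale property of $\tilde M^i$ and the vanishing of the cross covariances, both of which are settled verbatim as in Lemma \ref{L1}. It is worth emphasizing one contrast with the truncation estimate: here I would \emph{not} invoke the separation bound of Proposition \ref{Lemma 4.1}. The crude count of $N_h-1$ nonzero summands is already sufficient, because the $h^{4d}$ prefactor from the mass weighting absorbs three full powers of $N_h\lesssim h^{-d}$, leaving the required $h^d$ decay without any gain from particle separation.
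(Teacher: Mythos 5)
Your proof is correct and follows essentially the same route as the paper: the same decomposition of $\tilde M_s$ into per-particle stochastic integrals driven by $B_i$, vanishing cross-covariances by independence of the Brownian motions, the It\^o isometry (the paper invokes Theorem 5.2.3 of \cite{EK}), the crude Cauchy--Schwarz bound on the integrand with $N_h-1$ summands, and the counting bound $h^d N_h\le U_D$ applied three times. The only cosmetic difference is that the paper first splits each $\tilde M_s^i$ into its $d$ scalar coordinate martingales $\tilde M_s^{i,k}$ and kills the cross terms $(i,k)\neq(j,l)$ one by one, whereas you work with the vector-valued integrand directly; your remark that the separation estimate of Proposition \ref{Lemma 4.1} is not needed here also matches the paper's proof.
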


\begin{proof}
Again note that $\tilde M_s=\sum_{i=1}^{N_h}\rho_0(\theta(i,h)h)\tilde M_s^i$ with 
$$
\tilde M_s^i=\sum_{k=1}^d \tilde M_s^{i,k}
$$
where 
$$
\tilde M_s^{i,k}=\int_0^s Z_{i,k}(q)dB^{(k)}_i(q)
$$
and
\beq
\begin{aligned}
Z_{i,k}(q)= &2h^{2d} \int_{{\mathbb R}^d}\varphi_{\ep_h}(x) \sum_{j=1}^{i-1} \rho_0(\theta(j,h)h)\frac{\partial\varphi_{\ep_h}\left(x+\hat X_{h,i}(q)-\hat X_{h,j}(q)\right)}{\partial x_k} dx\\
-&2h^{2d} \int_{{\mathbb R}^d}\varphi_{\ep_h}(x) \sum_{j=i+1}^{N_h} \rho_0(\theta(j,h)h)\frac{\partial\varphi_{\ep_h}\left(x+\hat X_{h,i}(q)-\hat X_{h,j}(q)\right)}{\partial x_k} dx.
\end{aligned}
\eeq
It is easy to see that the integrand $Z_{i,k}(q)$ is continuous and adapted to $\mathcal{F}^{N_h}_q$ and that 
\beq
|Z_{i,k}(q)|\le 2h^{d}U_D\|\varphi_{\ep_h}\|\cdot \left\|\frac{\partial \varphi_{\ep_h}}{\partial x_k}\right\|\cdot \|\rho_0\|_\infty
\eeq
Then again accordion to Theorem 5.2.3 in \cite{EK} we have for all $i\in \{1, 2,\cdots, N_h\}$ and $k\in\{1,2,\cdots, d\}$, $M_s^{i,k}$ is a square integrable martingale with that 
\beq
\label{L2IK2}
E[(\tilde M_s^{i,k})^2]= E\left(\int_0^s Z_{i,k}(q)^2 dq\right)\le 4h^{2d}U_D^2\|\varphi_{\ep_h}\|^2\cdot \left\|\frac{\partial \varphi_{\ep_h}}{\partial x_k}\right\|^2\cdot \|\rho_0\|_\infty^2 s. 
\eeq
and that for all $(i,k)\not=(j,h)$ we have that 
\beq
\label{QCov}
\begin{aligned}
\langle \tilde M_s^{i,k},\tilde M_s^{j,h}\rangle&=\langle Z_{i,k}\cdot B_i^{(k)}(s), Z_{i,h}\cdot B_j^{(h)}(s)\rangle\\
&=\int_0^s Z_{i,k}(q)\cdot Z_{j,h}(q) \, d\langle B_i^{(k)}(q), B_j^{(h)}(q)\rangle\\
&=\int_0^s Z_{i,k}(q)\cdot Z_{j,h}(q) \, d0=0,
\end{aligned}
\eeq
which implies that 
\beq
\label{MeanMix2}
E\left( \tilde M_s^{i,k} \tilde M_s^{j,h}\right)\equiv 0.
\eeq 
Thus we immediately have 
$$
E[(\tilde M_s^{i})^2]\le 4h^{2d}U_D^2\|\varphi_{\ep_h}\|^2\cdot \left\|\nabla \varphi_{\ep_h}\right\|^2\cdot \|\rho_0\|_\infty^2 s
$$
and $E \big(\tilde M_s^i \, \tilde M_s^j\big) =0$ for all $i\not=j$. Thus 
\beq
E\big(  (\tilde M_s)^2\big)
=\sum_{i=1}^{N_h} \rho_0(\theta(i,h)h)^2 E \big( (\tilde M_s^i)^2 \big) 
\le  \frac{4h^{d}U_D^3}{\ep^{2d+2}}\|\varphi\|^2\|\nabla \varphi\|^2\|\rho_0\|_\infty^4 s.
\eeq
\end{proof}

\section{Estimation of the Initial Error}
\mn To estimate the distance between the initial empirical density 
$$
\rho_{h}(x,0)=\sum_{i=1}^{N_h} h^d \rho_0(\theta_{i,h}) \varphi_{\ep_h}\left(x-\theta_{i,h}\right). 
$$
and $\rho_0(x)$, we have a lemma as follows:
\begin{lemma}
\label{initial lemma}
For Sufficiently small $h$, 
$$
\big\|\rho_{h}(x,0)-\rho_0(x)\big\|^2\le \ep_h^{1/2}. 
$$
\end{lemma}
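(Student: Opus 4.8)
The plan is to insert the continuous mollification $\varphi_{\ep_h}*\rho_0$ as an intermediate object and split
\[
\rho_{h}(\cdot,0)-\rho_0=\big(\rho_{h}(\cdot,0)-\varphi_{\ep_h}*\rho_0\big)+\big(\varphi_{\ep_h}*\rho_0-\rho_0\big),
\]
bounding the second bracket as a \emph{mollification} error and the first as a \emph{quadrature} error. The key observation making the first bracket transparent is that $\rho_h(x,0)=\sum_i h^d\rho_0(h\theta_{i,h})\varphi_{\ep_h}(x-h\theta_{i,h})$ is exactly $\varphi_{\ep_h}$ convolved with the discrete measure $\mu_h=\sum_i h^d\rho_0(h\theta_{i,h})\delta_{h\theta_{i,h}}$, while $\varphi_{\ep_h}*\rho_0$ is $\varphi_{\ep_h}$ convolved with $\rho_0\,dy$; since $D\subset\bigcup_{\theta\in\Theta_h}C(\theta,h)$ and $\rho_0$ vanishes off $D$, the difference $\rho_h(\cdot,0)-\varphi_{\ep_h}*\rho_0$ is precisely the cell-by-cell midpoint-quadrature error of the convolution integral over the cells $C(\theta,h)$. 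I will use throughout that $\rho_0\in H^k$ with $k\ge \tfrac32 d+2$ is, by Sobolev embedding, a $C^2$ (in particular globally Lipschitz) compactly supported function on all of $\mathbb{R}^d$, so the zero-extension causes no loss of regularity at $\partial D$.

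For the mollification error I would use the Lipschitz continuity of $\rho_0$ together with $\int\varphi_{\ep_h}=1$ and $\mathrm{supp}\,\varphi_{\ep_h}\subset\{|z|_\infty\le\ep_h/2\}$: pointwise,
\[
\big|(\varphi_{\ep_h}*\rho_0)(x)-\rho_0(x)\big|\le\int\varphi_{\ep_h}(x-y)\,L_{\rho_0}|x-y|\,dy\le \tfrac{\sqrt d}{2}L_{\rho_0}\,\ep_h .
\]
Since both $\rho_0$ and $\varphi_{\ep_h}*\rho_0$ are supported in a fixed compact neighborhood of $D$ (of measure bounded by $U_D$ for small $\ep_h$), integrating the square gives $\|\varphi_{\ep_h}*\rho_0-\rho_0\|^2\le C\,\ep_h^2$, with $C$ depending only on $L_{\rho_0}$, $d$ and $D$.

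For the quadrature error I would estimate cell by cell. Writing $g_x(y)=\rho_0(y)\varphi_{\ep_h}(x-y)$, the midpoint error on $C(\theta,h)$ is at most $\tfrac{\sqrt d}{2}h\cdot h^d\sup_{C(\theta,h)}|\nabla_y g_x|$, and $|\nabla_y g_x|\le \sqrt d\,L_{\rho_0}\varphi_{\ep_h}(x-y)+\|\rho_0\|_\infty\,\ep_h^{-d-1}\max_k\|\partial_k\varphi\|_\infty$, where the factor $\ep_h^{-d-1}$ comes from the scaling $\partial_k\varphi_{\ep_h}=\ep_h^{-d-1}(\partial_k\varphi)(\cdot/\ep_h)$. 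The cells contributing at a fixed $x$ are those with $h\theta$ within $\ep_h/2+\sqrt d\,h/2$ of $x$, which number at most $C(\ep_h/h)^d$ once $h\ll\ep_h$; summing, $h^d\sum_{\theta\ \mathrm{active}}\varphi_{\ep_h}(x-\cdot)\le C$ (an inflated Riemann sum of $\int\varphi_{\ep_h}=1$) and $h^d\cdot\#\{\mathrm{active}\}\le C\ep_h^d$. This yields the pointwise bound $|\rho_h(x,0)-\varphi_{\ep_h}*\rho_0(x)|\le C\,h\,\ep_h^{-1}$ on a bounded set, hence $\|\rho_h(\cdot,0)-\varphi_{\ep_h}*\rho_0\|^2\le C\,h^2\ep_h^{-2}$. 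The scale separation $h\ll\ep_h$ — guaranteed by $\ep_h=h^{1/6d}$ with $1/6d<1$ — is exactly what lets the blow-up $\ep_h^{-d-1}$ be absorbed by the cell volume $h^d$ and the factor $h$; this is the main technical point, and managing the boundary cells (resolved by the smoothness of $\rho_0$ noted above) is the only delicate bookkeeping.

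Combining the two parts, $\|\rho_h(\cdot,0)-\rho_0\|^2\le 2C(\ep_h^2+h^2\ep_h^{-2})$. Substituting $\ep_h=h^{1/6d}$ gives $\ep_h^2=h^{1/3d}$ and $h^2\ep_h^{-2}=h^{2-1/3d}$, so the sum is $\le C\,h^{1/3d}$ for $d\ge1$. Because the target exponent is much smaller, $\tfrac1{3d}>\tfrac1{12d}$, we have $C\,h^{1/3d}\le h^{1/12d}=\ep_h^{1/2}$ for all sufficiently small $h$, which is the claim. The wide gap between the natural rate $\ep_h^2$ and the required bound $\ep_h^{1/2}$ is why crude constants suffice and no sharp quadrature analysis is needed.
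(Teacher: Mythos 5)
Your proof is correct, and it follows the same two-step ``intermediate density'' strategy as the paper but with a genuinely different choice of intermediate object, which changes how the boundary of $D$ is handled. The paper inserts $\tilde \rho_{h}(x,0)=\rho_0(x)\sum_{i} h^d \varphi_{\ep_h}(x-\theta_{i,h}h)$, i.e.\ it freezes the density at the evaluation point $x$; its first step swaps the discrete weights $\rho_0(\theta_{i,h}h)$ for $\rho_0(x)$ (Lipschitz error $O(\ep_h)$), and its second step compares the Riemann sum $\sum_i h^d\varphi_{\ep_h}(x-\theta_{i,h}h)$ with $\int\varphi_{\ep_h}=1$. Because $\Theta_h$ contains only lattice points inside $D$, that Riemann sum is deficient near $\partial D$, so the paper needs a separate boundary case, rescued by $\rho_0(x)\le L_{\rho_0}\ep_h$ when $d(x,D^c)\le\ep_h$. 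You instead insert the continuous mollification $\varphi_{\ep_h}*\rho_0$, so your quadrature step applies the midpoint rule to the product $g_x(y)=\rho_0(y)\varphi_{\ep_h}(x-y)$, which vanishes (with full regularity, by the $H^k$ assumption on $\rho_0$ over all of $\mathbb{R}^d$) outside $D$; this makes the boundary cells harmless with no case analysis, at the modest cost of tracking the product's gradient and the count of active cells, where the scale separation $h\ll\ep_h$ absorbs the $\ep_h^{-d-1}$ blow-up exactly as you say. Both routes give the same pointwise rates --- a Lipschitz/mollification error $O(\ep_h)$ and a quadrature error $O(h/\ep_h)$ --- hence the same squared $L^2$ bound $O(\ep_h^2+h^2\ep_h^{-2})=O(h^{1/3d})$ over the fixed compact set $D_1$, which beats the required $\ep_h^{1/2}=h^{1/12d}$ with room to spare; the paper's final display exploits the same large margin ($\ep_h^2\le\ep_h^{1/2}$ for small $h$).
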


\begin{proof}
To prove this lemma, we introduce the following intermediate density function 
\beq
\label{initial intermediate}
\tilde \rho_{h}(x,0)=\rho_0(x)\left[\sum_{i=1}^{N_h} h^d \varphi_{\ep_h}\left(x-\theta_{i,h}\right)\right]. 
\eeq
To estimate the distance between $\tilde \rho_{h}(x,0)$ and $\rho_h(x,0)$, we have for any $x\in D_1$, 
\begin{align*}
\big|\rho_{h}(x,0)-\tilde \rho_{h}(x,0)\big|&\le \sum_{i=1}^{N_h} h^d |\rho_0(\theta_{i,h})-\rho_0(x)| \varphi_{\ep_h}\left(x-\theta_{i,h}\right)\\
&=\sum_{|x-\theta_{i,h}|_\infty\le \ep_h} h^d |\rho_0(\theta_{i,h})-\rho_0(x)| \varphi_{\ep_h}\left(x-\theta_{i,h}\right)
\end{align*}
Noting that $\rho_0$ is a Lipschitz continuous function with Lipschitz constant $L_{\rho_0}$, and that 
$$
\varphi_{\ep_h}(x)=\frac{1}{\ep_h^d} \varphi\left( \frac{x}{\ep_h}\right)
$$ 
we have 
\begin{align*}
\big|\rho_{h}(x,0)-\tilde \rho_{h}(x,0)\big|&\le\sum_{|x-\theta_{i,h}|\le \ep_h} h^d |\rho_0(\theta_{i,h})-\rho_0(x)| \varphi_{\ep_h}\left(x-\theta_{i,h}\right)\\
&\le L_{\rho_0}\|\varphi\|_\infty\sum_{|x-\theta_{i,h}|_\infty\le \ep_h} \frac{h^d}{\ep_h^{d-1}}. 
\end{align*}
Recalling by definition $\ep_h=h^{1/6d}\gg h$, when $h$ is sufficiently small, we have 
\beq
\label{initial 1}
\big|\rho_{h}(x,0)-\tilde \rho_{h}(x,0)\big|\le L_{\rho_0}\|\varphi\|_\infty 3^d \ep_h.
\eeq
And for any $x\in D_1^c$
$$
\tilde \rho_{h}(x,0)=\rho_h(x,0)=0.
$$
And for the distance between $\tilde \rho_{h}(x,0)$ and $\rho_0(x)$, we first note that for any $x\in D$
$$
\tilde \rho_{h}(x,0)=\rho_0(x)\left[\sum_{i\le N_h, |x-\theta_{i,h}|\le \ep_h} h^d \varphi_{\ep_h}\left(x-\theta_{i,h}\right)\right].
$$
Then for any $x\in D$ and $i$ such that $|x-\theta_{i,h}|\le \ep_h$, 
$$
\left|h^d \varphi_{\ep_h}\left(x-\theta_{i,h}\right)-\int_{C(x-\theta_{i,h},h)}\varphi_{\ep_h}(y)dy\right|\le h^{d+1} \ep_h^{-d-1} \|\nabla \varphi\|_\infty.
$$
Summing up over all such neighborhood of size $h$ centered at $x-\theta_{i,h}$, we have 
\begin{align*}
\tilde \rho_{h}(x,0)&\ge \rho_0(x)\left[\int_{|x-y|_\infty\le \ep_h\cap \{y\in D\}}\varphi_{\ep_h}(x-y)dy-\int_{|x-y|_\infty\le 2\ep_h} h \ep_h^{-d-1} \|\nabla \varphi\|_\infty dy\right]\\
&= \rho_0(x)\left[\int_{|x-y|_\infty\le \ep_h\cap \{y\in D\}}\varphi_{\ep_h}(x-y)dy-4^dh\ep_h^{-1}  \|\nabla \varphi\|_\infty \right], 
\end{align*}
and 
\begin{align*}
\tilde \rho_{h}(x,0)&\le \rho_0(x)\left[\int_{|x-y|_\infty\le 2\ep_h}\varphi_{\ep_h}(x-y)dy+\int_{|x-y|_\infty\le 2\ep_h} h \ep_h^{-d-1} \|\nabla \varphi\|_\infty dy\right]\\
&= \rho_0(x)\left[1+4^dh\ep_h^{-1}  \|\nabla \varphi\|_\infty \right], 
\end{align*}
Thus for any $x$ such that $d(x,D^c)> \ep_h$, noting that $|x-y|_\infty\le \ep_h\subset \{y\in D\}$, we have 
\beq
\label{initial 21}
\big|\tilde \rho_{h}(x,0)-\rho_0(x)\big|\le 4^dh\ep_h^{-1}  \|\nabla \varphi\|_\infty\|\rho_0\|_\infty =4^d\ep_h^{5d-1}  \|\nabla \varphi\|_\infty\|\rho_0\|_\infty
\eeq
And for any  $x$ such that $d(x,D^c)\le \ep_h$, we have 
$$
0\le \rho_0(x)\le L_{\rho_0}\ep_h,
$$
and 
$$
0\le \tilde \rho_{h}(x,0)\le \rho_0(x)\left[1+4^dh\ep_h^{-1}  \|\nabla \varphi\|_\infty \right].
$$
Thus
\beq
\label{initial 22}
\big|\tilde \rho_{h}(x,0)-\rho_0(x)\big|\le L_{\rho_0}\ep_h\left[1+4^dh\ep_h^{-1}  \|\nabla \varphi\|_\infty \right]
\eeq
Combining \eqref{initial 21} and \eqref{initial 22}, we have that when $h$ is sufficiently small, for any $x\in D$
\beq
\label{initial 2}
\big|\tilde \rho_{h}(x,0)-\rho_0(x)\big|\le 2L_{\rho_0}\ep_h.
\eeq
And for any $x\in D^c$
$$
\tilde \rho_{h}(x,0)=\rho_0(x)=0.
$$
Thus combining \eqref{initial 1} and \eqref{initial 2}, we have for any $x\in D_1$, 
\beq
\label{initial}
\big|\rho_{h}(x,0)-\rho_0(x)\big|\le L_{\rho_0}\big(2+3^d\|\varphi\|_\infty \big)\ep_h.
\eeq
and for any $x\in D_1^c$
$$
\rho_{h}(x,0)=\rho_0(x)=0.
$$
Thus 
\begin{align*}
\big\|\rho_{h}(x,0)-\rho_0(x)\big\|^2&=\int_{D_1}\big|\rho_{h}(x,0)-\rho_0(x)\big|^2 dx\\
&\le U_D L_{\rho_0}^2\big(2+3^d\|\varphi\|_\infty \big)^2 \ep_h^2\le \ep_h^{1/2}.
\end{align*}
The Proof of Lemma \ref{initial lemma} is complete. 
\end{proof}

\section{Proof of Theorem 2 and Theorem 1}
In this section, we will put all the estimations we have on different error terms together to finish the proof of Theorem 2 and then Theorem 1. First since that in \eqref{TR1} we have 
$$
\begin{aligned}
E&\left(\max\left\{ \sup_{s\le t}\left\{Tr(s)-\frac{1}{2} \int_0^s\|\nabla (\rho-\hat\rho_{h})(\cdot,q) \|^2\, dq\right\}, 0\right\}\right)\\
&\le h^dt\ep_h^{-d-2}\|\nabla \varphi\|^2\|\rho_0\|_\infty U_D+2U_D^2 L_F^2\|\varphi\|^2 \|\rho_0\|_\infty^2 \left[\frac{t}{N_h\ep_h^{d-2}}+C_1(t)\ep_h+C_2(t)\frac{1}{N_h\ep_h^{d-1}}\right]\\
&\le C_{truncate}^2(t)\ep_h
\end{aligned}
$$
since that $\ep_h=h^{1/6d}$, where 
$$
C_{truncate}(t)=\sqrt{\|\nabla \varphi\|^2\|\rho_0\|_\infty U_D t+2U_D^2 L_F^2\|\varphi\|^2 \|\rho_0\|_\infty^2 [L_D t+C_1(t)+C_2(t)]}. 
$$
Thus according to Chebyshev inequality, let event 
$$
A_T=\left\{ \sup_{s\le t}\left\{Tr(s)-\frac{1}{2} \int_0^s\|\nabla (\rho-\hat\rho_{h})(\cdot,q) \|^2\, dq\right\}\le C_{truncate}(t)\ep_h^{1/2} \right\}.
$$
we have $P(A_T)\ge 1- C_{truncate}(t)\ep_h^{1/2}$. 

\mn Similarly, in Lemma \ref{L1} we have that $M_s$ is a $L^2$ integrable martingale and for any $s\le t$, 
$$
E\big(M_s^2 \big)\le \frac{4h^{d}U_D}{\ep^{d+2}}\|\nabla\varphi\|^2\|\rho_0\|^2_\infty\int_0^s\|\rho(\cdot,q)\|^2\, dq.
$$
Since $F$ is Lipschitz continuous from $\mathbb{R}^d\to \mathbb{R}^d$, it is also differentiable almost everywhere by Rademacher's theorem, see Theorem 3.1.6 of \cite{Geometric measure}. Thus for any $x$ such that $F$ is differentiable, we have $|| \nabla\cdot \vec F||_{L^\infty}\le d L_F$, which, according to \eqref{DT}, implies that $|| \rho(\cdot, s)||^2 \le e^{C_0\, s} || \rho_0||^2$ where $C_0 =2d L_F \ge 2 || \nabla\cdot \vec F||_{L^\infty}$. So we have 
$$
E\big(M_t^2 \big)\le \frac{4h^{d}U_De^{C_0t}}{\ep_h^{d+2}C_0}\|\nabla\varphi\|^2\|\rho_0\|^2_\infty\le \frac{4U_De^{C_0t}}{C_0}\|\nabla\varphi\|^2\|\rho_0\|^2_\infty\ep_h^3
$$
since $\ep_h=h^{1/6d}$. By Doob and Chebyshev inequality, letting
$$
C_M(t)=\left(\frac{16U_De^{C_0t}}{C_0}\|\nabla\varphi\|^2\|\rho_0\|^2_\infty \right)^{1/3}
$$
and event
$$
A_M=\left\{\sup_{s\in [0,t]}|M_s|\le C_M(t)\ep_h\right\}
$$
we have $P(A_M)\ge1-C_M(t)\ep_h$. Similarly, according to Lemma \ref{L2}, $\tilde M_s$ is a $L^2$ integrable martingale and
$$
E\big( (\tilde M_t)^2 \big)\le \frac{4h^{d}U_D^3}{\ep_h^{2d+2}}\|\varphi\|^2\|\nabla \varphi\|^2\|\rho_0\|_\infty^4 t\le 4U_D^3\|\varphi\|^2\|\nabla \varphi\|^2\|\rho_0\|_\infty^4 t \ep_h^2
$$
for $\ep_h=h^{1/6d}$. Then let 
$$
C_{\tilde M}(t)=\left(16U_D^3\|\varphi\|^2\|\nabla \varphi\|^2\|\rho_0\|_\infty^4 t \right)^{1/3}
$$
and event 
$$
A_{\tilde M}=\left\{\sup_{s\in [0,t]}|\tilde M_s|\le C_{\tilde M}(t)\ep_h^{2/3}\right\}.
$$
We have again by Doob and Chebyshev inequality, $P(A_{\tilde M})\ge1-C_{\tilde M}(t)\ep_h^{2/3}$. Finally note that in Lemma \ref{initial lemma}, the initial error is bounded by 
$$
\|\rho_0-\hat \rho_{h}(\cdot, 0)\|\le \ep_h^{1/2}
$$
when $h$ is sufficiently small. Then under the event 
$$
A=A_T\cap A_M\cap A_{\tilde M}
$$
such that 
\begin{align*}
P(A)&\ge 1- C_{truncate}(t)\ep_h^{1/2}-C_M(t)\ep_h-C_{\tilde M}(t)\ep_h^{2/3}\\
&\ge 1- [C_{truncate}(t)+C_M(t)+C_{\tilde M}(t)] \ep_h^{1/2}, 
\end{align*}
we have for any $s\in [0,t]$, 
\begin{align*}
\| (\rho-\hat\rho_{h})&(\cdot,s)\|^2+\frac{1}{2}\int_0^ s\|\nabla(\rho-\hat\rho_{h})(\cdot,q)\|^2\, dq \\
&\le \ep_h^{1/2}-\int_0^s \int_{\mathbb{R}^d} \nabla \cdot F(x,q) [\rho(x,q)-\hat \rho_h(x,q)]^2 dx dq+[C_{truncate}(t)+C_M(t)+C_{\tilde M}(t)] \ep_h^{1/2}. 
\end{align*}
Noting that the inequality above holds for all $s\in [0,t]$ and that $\ep_h^{1/2}=h^{1/12d}$, then let 
\beq
\label{FunC1}
c_1(t)=2e^{C_0 t} [1+C_{truncate}(t)+C_M(t)+C_{\tilde M}(t)].
\eeq
Gronwall's inequality finishes the proof of Theorem 2. 

\mn With Theorem 2 proved, combining it with the result of Theorem 3 and let 
\beq
\label{FunC}
c(t)=c_0(t)+c_1(t).
\eeq
The proof of Theorem 1 is complete. 

\clearpage

{\small
}

\end{document}